\numberwithin{equation}{section} 
\newtheorem{thm}{Theorem}[section]
  \newtheorem{lem}[thm]{Lemma}
\newtheorem{lemma}[thm]{Lemma}
  \newtheorem{prop}[thm]{Proposition}
\theoremstyle{definition}
  \theoremstyle{remark}
\newtheorem{remark}[thm]{Remark}
\def\dangbend{ALERT}
\def\dangbendpar{\hbox to 20pt{\hss ALERT\enspace}}
\def\db{\hangindent=20pt\hangafter=-2\noindent
\hbox to 0pt{\hss\hbox to
20 pt{\dangbendpar\hss}}}
\def\altdb{\vadjust{\vbox to 0pt{\vss\hbox{\kern \hsize
\quad{\dangbend}}\kern\baselineskip\kern-10pt}}}}
\DeclareMathOperator{\Ind}{Ind}
\DeclareMathOperator{\Prim}{Prim}
\DeclareMathOperator*{\supp}{supp}
\DeclareMathOperator{\Aut}{Aut}
\DeclareMathOperator{\ind}{ind}
\newcommand{\go}{G^{(0)}}
\newcommand{\so}{\Sigma^{(0)}}
\newcommand{\indtog}[1]{\Ind_{#1}^{G}}
\newcommand{\indgug}{\indtog{G(u)}}
\let\tensor=\otimes
\renewcommand\H{\mathcal{H}}
\newcommand\atensor{\odot}
\def\mathcs{C^{*}}
\newcommand{\cs}{\ensuremath{\mathcs}}
\newcommand{\dint}{\int^{\oplus}}
\newcommand{\primcss}{\Prim\cs(\Sigma)}
\newcommand{\G}{\underline{\mathcal G}}
\newcommand{\css}{\mathcal S}
\newcommand{\tr}{\tilde r}
\newcommand\hr{\hat r}
\newcommand{\set}[1]{\{\,#1\,\}}
\newcommand{\dirintfont}[1]{\mathscr{#1}}
\newcommand{\HH}{\dirintfont{H}}
\newcommand{\KK}{\dirintfont{K}}
\newcommand\VV{\dirintfont{V}}
\newcommand{\restr}[1]{|_{{#1}}}
\newcommand\tk{\tilde{\mathcal{K}}}
\newcommand\deltau{{\delta'}}
\newcommand\tH{\tilde H}
\newcommand\tHH{\tilde \HH}
\newcommand\Rind{\underline{L}}
\let\phi\varphi
\def\<{\langle}
\def\>{\rangle}
\let\ipscriptstyle=\scriptscriptstyle
\def\lipsqueeze{{\mskip -3.0mu}}
\def\ripsqueeze{{\mskip -3.0mu}}
\def\ipcomma{\nobreak\mathrel{,}\nobreak}
\newbox\ipstrutbox
\def\ipstrut{\copy\ipstrutbox}
\def\lip#1<#2,#3>{\mathopen{\relax_{\ipstrut\ipscriptstyle{
#1}}\lipsqueeze
\langle} #2\ipcomma #3 \rangle}
\def\blip#1<#2,#3>{\mathopen{\relax_{\ipstrut
\ipscriptstyle{ #1}}\lipsqueeze\bigl\langle} #2\ipcomma #3 \bigr\rangle}
\def\rip#1<#2,#3>{\langle #2\ipcomma #3
\rangle_{\ripsqueeze\ipstrut\ipscriptstyle{#1}}}
\def\brip#1<#2,#3>{\bigl\langle #2\ipcomma #3
\bigr\rangle_{\ripsqueeze\ipstrut\ipscriptstyle{#1}}}
\def\angsqueeze{\mskip -6mu}
\def\smangsqueeze{\mskip -3.7mu}
\def\trip#1<#2,#3>{\langle\smangsqueeze\langle #2\ipcomma #3
\rangle\smangsqueeze\rangle_{\ripsqueeze\ipstrut\ipscriptstyle{#1}}}
\def\btrip#1<#2,#3>{\bigl\langle\angsqueeze\bigl\langle #2\ipcomma
#3
\bigr\rangle
\angsqueeze\bigr\rangle_{\ripsqueeze\ipstrut\ipscriptstyle{#1}}}
\def\tlip#1<#2,#3>{\mathopen{\relax_{\ipstrut\ipscriptstyle{
#1}}\lipsqueeze \langle\smangsqueeze\langle} #2\ipcomma #3
\rangle\smangsqueeze\rangle}
\def\btlip#1<#2,#3>{\mathopen{\relax_{\ipstrut\ipscriptstyle{
#1}}\lipsqueeze
\bigl\langle\angsqueeze\bigl\langle} #2\ipcomma #3
\bigr\rangle\angsqueeze\bigr\rangle}
\def\ip(#1|#2){(#1\mid #2)}
\def\bip(#1|#2){\bigl(#1 \mid #2\bigr)}
\def\Bip(#1|#2){\Bigl( #1 \bigm| #2 \Bigr)}
\newcommand\Rip{\rip \scriptstyle\star}
\renewcommand\MR[1]{\relax}
\newcommand\Lpp{L''}
\newcommand\Lp{L'}
\newcommand\lambdabar{\underline{\lambda}}
\def\charfcn#1{\mathbb{1}_{#1}}
\newcommand\bb{\mathcal{B}^{b}}
\newcommand\cb{C^{b}}
\newcommand\half{\frac12}
\begin{document}

\title[The Effros-Hahn Conjecture]{The Generalized Effros-Hahn
  Conjecture for Groupoids}

\author{Marius Ionescu}
\address{Department of Mathematics \\ Cornell University \\ Ithaca, NY
14853-4201}

\curraddr{Department of Mathematics \\ University of Connecticut,
  Storrs, CT 06269-3009}

\email{ionescu@math.uconn.edu}

\author{Dana Williams}
\address{Department of Mathematics \\ Dartmouth College \\ Hanover, NH
03755-3551}

\email{dana.p.williams@Dartmouth.edu}

\begin{abstract}
  The generalized Effros-Hahn conjecture for groupoid \cs-algebras
  says that, if $G$ is amenable, then every primitive ideal of the
  groupoid \cs-algebra $\cs(G)$ is induced from a stability group.  We
  prove that the conjecture is valid for all second countable amenable
  locally compact Hausdorff groupoids.  Our results are a sharpening
  of previous work of Jean Renault and depend significantly on his
  results.
\end{abstract}

\date{28 July 2008}

\subjclass{Primary: 46L55, 46L05.  Secondary: 22A22}

\keywords{Groupoid, Effros-Hahn conjecture, groupoid dynamical system,
induced primitive ideal.}
\maketitle

\section{Introduction}
\label{sec:introduction}

A dynamical system $(A,G,\alpha)$, where $A$ is a \cs-algebra, $G$ is
a locally compact group and $\alpha$ is a strongly continuous
homomorphism of $G$ into $\Aut A$, is called \emph{EH-regular} if
every primitive ideal of the crossed product $A\rtimes_{\alpha}G$ is
induced from a stability group (see
\cite{wil:crossed}*{Definition~8.18}).  In their 1967 \emph{Memoir}
\cite{effhah:mams67}, Effros and Hahn conjectured that if $(G,X)$ was
a second countable locally compact transformation group with $G$
amenable, then $\bigl(C_{0}(X),G,\operatorname{lt}\bigr)$ should be
EH-regular.  This conjecture, and its generalization to dynamical
systems, was proved by Gootman and Rosenberg in \cite{gooros:im79}
building on results due to Sauvageot
\citelist{\cite{sau:jfa79}\cite{sau:ma77}}. For additional comments on
this result, its applications, as well as precise statement and proof,
see \cite{wil:crossed}*{\S8.2 and Chap.~9}.

In \cite{ren:jot91}, Renault gives the following version of the 
Gootman-Rosenberg-Sauvageot Theorem for groupoid dynamical systems.
Let $G$ be a locally compact groupoid and $(A,G,\alpha)$
a groupoid dynamical system.  If $R$ is a representation of the
crossed product $A\rtimes_{\alpha}G$, then Renault forms the
restriction, $\hat L$, of $R$ to the isotropy groups of $G$ and forms
an induced representation $\Ind \hat L$ of $A\rtimes_{\alpha} G$ such
that $\ker R\subset \ker (\Ind \hat L)$
\cite{ren:jot91}*{Theorem~3.3}.  When $G$ is suitably amenable, then
the reverse conclusion holds \cite{ren:jot91}*{Theorem~3.6}.  This is
a powerful result and allows Renault to establish some very striking
results concerning the ideal structure of crossed products and has deep
applications to the question of when a crossed product is simple (see
\cite{ren:jot91}*{\S4}).

In this note, our object is to provide a significant sharpening of
Renault's result in the case a groupoid \cs-algebra --- that is, a
dynamical system where 
$G$ acts on the commutative algebra
$C_{0}(\go)$ by translation.  We aim to show that if $G$ is Hausdorff
and amenable, then every primitive
ideal $K$ of $\cs(G)$ is induced from a stability group.  That is, we
show that $K=\Ind_{G(u)}^{G}J$ for a primitive ideal $J$ of
$\cs\bigl(G(u)\bigr)$, where $G(u)$ is the stability group at some
$u\in\go$.  This not only provides a cleaner generalization of
the Gootman-Rosenberg-Sauvageot result to the groupoid setting, but
gives us a much better means to study the fine ideal structure of
groupoids and the primitive ideal space (together with its topology)
in particular.  (For further discussion of these ideas, see
\cite{echwil:tams08}*{\S4}.)

In Section~\ref{sec:main-result} we give a careful statement of the
main result, and give a brief summary of some of the tools and
ancillary results needed in the sequel.  Since the proof of the main
result
is
rather involved, we also give an overview of the proof to make the
subsequent 
details easier to parse.  Then in
Section~\ref{sec:proof-main-theorem}, we give the proof itself.  Our
techniques require that we work whenever possible with separable
\cs-algebras.  Therefore all our groupoids are assumed to be second
countable.  We also assume that our locally compact groupoids have
continuous Haar systems and are Hausdorff.  We also assume that
homomorphisms between \cs-algebras are $*$-preserving and that
representations are nondegenerate.

\section{The Main Result}
\label{sec:main-result}
Unlike the situation for groups, the definition of amenability of a
locally compact groupoid is a bit controversial.  The currently
accepted definition originally comes from \cite{ren:groupoid}*{p.~92}:
a locally compact groupoid $G$ with continuous Haar system
$\lambda=\set{\lambda^{u}}$ is \emph{amenable} if there is a net
$\set{\phi_{i}}\subset C_{c}(G)$ such that
\begin{enumerate}
\item the functions $u\mapsto
  \int_{G}|\phi_{i}(\gamma)|^{2}\,d\lambda^{u}(\gamma) $ are uniformly
  bounded, and
\item the functions $\gamma\mapsto \phi_{i}*\phi_{i}^{*}(\gamma)$
  converge to the constant function $1$ uniformly on compacta.
\end{enumerate}
If $G$ is a group, then we recover the usual notion of amenability
(for example, see \cite{wil:crossed}*{Proposition~A.17}).  A different
definition of amenability for a locally compact groupoid is given in
\cite{anaren:amenable00}*{Definition~2.2.8}.\footnote{Both the
  numbering and the statements of some results in the published
  version of this paper differ from those in the
  widely circulated preprint.}
Fortunately, \cite{anaren:amenable00}*{Proposition~2.2.13(iv)},
implies the two definitions are equivalent (and gives some additional
equivalent conditions).  In particular,
\cite{anaren:amenable00}*{Theorem~2.2.13} implies that
amenability is preserved under equivalence of groupoids as defined in
\cite{mrw:jot87}*{Definition~2.1}.  Thus the notion of amenability is
independent of the choice of continuous Haar system on $G$.

\begin{thm}
  \label{thm:Main}Assume that $G$ is a second countable locally
  compact Hausdorff groupoid with Haar system $\set
  {\lambda^{u}}_{u\in\go }$.  Assume also that $G$ is amenable.  If
  $K\subset C^{*}(G)$ is a primitive ideal, then $K$ is induced from an
  isotropy group. That is,
  \begin{equation*}
    K=\Ind _{G(u)}^{G}J
  \end{equation*}
  for a primitive ideal $J\in\Prim (\cs (G(u))$.
\end{thm}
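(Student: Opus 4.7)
The plan is to refine Renault's restriction-induction theorem by showing that the representation of the isotropy subgroupoid it produces can, without changing the kernel of the induced representation, be replaced by a representation of a single isotropy group $G(u)$. Concretely, since $K$ is primitive there is an irreducible representation $R$ of $\cs(G)$ with $\ker R=K$. Applying Renault's theorem to the groupoid dynamical system $\bigl(C_{0}(\go),G,\mathrm{lt}\bigr)$ produces a representation $\hat L$ of $\cs(G')$ on the isotropy bundle $G'=\set{\gamma\in G:s(\gamma)=r(\gamma)}$, so that by amenability $K=\ker(\Ind\hat L)$. The task then reduces to showing that $\Ind\hat L$ has the same kernel as $\indtog{G(u)}J$ for some $u\in\go$ and some primitive $J\subset\cs(G(u))$.

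To identify that single fibre, I would disintegrate. Since $G'$ is a group bundle over $\go$ and $\hat L$ carries a quasi-invariant measure $\mu$ on $\go$, the representation $\hat L$ should decompose as a direct integral $\dint L_{u}\,d\mu(u)$ with $L_{u}$ a representation of $\cs\bigl(G(u)\bigr)$ for $\mu$-almost every $u$. Further disintegrating each $L_{u}$ into irreducibles yields a Borel field of primitive ideals $J_{u,\nu}\in\Prim(\cs(G(u)))$ whose intersection recovers $\ker L_{u}$. The heart of the argument is then to establish a compatibility of the form
\[
\ker(\Ind\hat L)\;=\;\bigcap_{(u,\nu)}\indtog{G(u)}J_{u,\nu},
\]
with the intersection running over a Borel-indexed family drawn from $\supp\mu$. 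Granted such a description, primality of $K$ together with a Baire-category or measurable-selection argument on this family of induced primitive ideals will force $K$ itself to coincide with one of the constituents $\indtog{G(u)}J_{u,\nu}$, which is precisely the conclusion of the theorem.

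The main obstacle is the displayed equality. It is a groupoid analogue of the common-kernel disintegration that powers the Gootman-Rosenberg proof of the classical Effros-Hahn conjecture: one must interchange induction with direct integrals in such a way that the kernel of the induced representation is captured as an intersection of ideals induced from individual stability groups, and this description must be sufficiently Borel-measurable to allow the final primality step. Arranging this in the groupoid setting — essentially, importing into Renault's framework the measurable machinery of Sauvageot and Gootman-Rosenberg — is where the bulk of the technical work will lie, and is precisely the point at which the argument sharpens Renault's Theorem~3.6 from the whole isotropy bundle down to a single isotropy group.
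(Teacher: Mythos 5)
Your overall strategy --- restrict $R$ to the isotropy, induce back, use amenability and Renault's Theorems~3.3 and~3.6 to conclude $K=\ker(\Ind\hat L)$, and then argue that this kernel is a \emph{single} induced primitive ideal --- is the same as the paper's. But the step you yourself flag as ``the heart of the argument'' is resolved in your sketch only by an appeal to ``primality of $K$ together with a Baire-category or measurable-selection argument,'' and that is a genuine gap: a primitive (hence prime) ideal which is the intersection of an uncountable Borel family of ideals need not coincide with any member of the family --- primeness handles only finite intersections --- so no measurable-selection device can extract the required $J\in\Prim\bigl(\cs(G(u))\bigr)$ from the bare identity $\ker(\Ind\hat L)=\bigcap_{(u,\nu)}\Ind_{G(u)}^{G}J_{u,\nu}$.

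The missing ingredients are equivariance and ergodicity. The paper does not decompose the restriction fibrewise into arbitrary irreducibles; it realizes it as a representation $r$ of the \cs-algebra $\cs(\Sigma)$ of the group bundle over the space of closed subgroups, checks that $(r,V)$ is covariant for a natural $G$-action $\alpha$ on $\cs(\Sigma)$, and then invokes Renault's \emph{equivariant ideal center decomposition} $\tr=\dint_{\Prim\cs(\Sigma)}\tr_{P}\,d\nu(P)$ into homogeneous representations satisfying $\tr_{P\cdot\gamma}\cong\gamma\cdot\tr_{P}$. Two consequences then do the work your sketch omits: first, since induction is insensitive to conjugating the inducing representation (Lemma~\ref{lem-ind-inv}), the Borel map $P\mapsto I_{P}:=\ker\bigl(\Ind_{G(\sigma(P))}^{G}\tr_{P}\bigr)$ is essentially $G$-invariant (Lemma~\ref{lem-g-inv}); second, the irreducibility of $R$ forces the decomposing measure $\nu$ to be \emph{ergodic} for the $G$-action on $\Prim\cs(\Sigma)$ (Proposition~\ref{prop:nu_ergodic}), whence $P\mapsto I_{P}$ is essentially constant and $\ker(\operatorname{ind}\tr)=I_{P_{0}}$ for a single $P_{0}$. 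A decomposition of each $L_{u}$ into irreducibles carries no such invariance, and without ergodicity there is no mechanism to collapse the intersection to one term. A secondary point: to obtain the covariance and the ideal-center machinery one needs the locally compact bundle $\Sigma$ over the subgroup space $\so$ rather than the merely Borel isotropy bundle $G'$ you propose to work with, since $u\mapsto G(u)$ is only Borel.
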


\begin{remark}
  \label{rem-weaker}
  Let $R$ be the equivalence relation on $\go$ induced by $G$:
  $r(\gamma)\sim s(\gamma)$ for all $\gamma\in G$.  We give $R$ the
  Borel structure coming from the topology on $R$ induced from $G$
  (which is often finer than the product topology on $R$ viewed as a
  subset of $\go\times\go$).  Since the proof of
  Theorem~\ref{thm:Main} requires only that we are entitled to apply
  Renault's \cite{ren:jot91}*{Theorem~3.6}, Theorem~\ref{thm:Main} is
  valid under the weaker assumption that the Borel equivalence
  relation $R$ is amenable with respect to every quasi-invariant
  measure $\mu$ on $\go$ \cite{ren:jot91}*{Definition~3.4} (see also
  \cite{anaren:amenable00}*{Definition~3.2.8}).  Some other valid
  hypotheses are discussed in \cite{ren:jot91}*{Remark~3.7}.  We have
  decided to use the less technical hypotheses of amenability of $G$
  here, and to leave the more technical, but weaker, hypotheses for
  the interested reader to sort out as needed.
\end{remark}

Even though Theorem~\ref{thm:Main} involves only groupoid
\cs-algebras, our techniques use the theory of groupoid dynamical
systems and their crossed products.  For these, we will employ the
notation and terminology from \cite{muhwil:nyjm08}*{\S4}.  In
particular, our treatment of direct integrals comes from
\cite{muh:cbms} (which was, in turn, motivated by \cite{ram:am76}),
and we suggest \cite{wil:crossed}*{Appendix~F} as a reference.  We
need Renault's disintegration theorem
\cite{ren:jot87}*{Proposition~4.2} for representations $R$ of
$\cs(G)$.  For the statement, notation and basics for this result, we
suggest \cite{muhwil:nyjm08}*{\S7}.  The disintegration result implies
that $R$ is the integrated form of an unitary representation
$(\mu,\go*\HH,V)$ of $G$ consisting of a quasi-invariant measure $\mu$
on $\go$, a Borel Hilbert bundle $\go*\HH$ and a family
$V=\set{V_{\gamma}:\H\bigl(s(\gamma)\bigr)\to\H\bigl(r(\gamma)\bigr)}$
of unitaries so that
\begin{equation*}
\hat V(\gamma)=\bigl(r(\gamma),V_{\gamma},s(\gamma)\bigr).
\end{equation*}
defines a groupoid homomorphism $\hat V:G\to
\operatorname{Iso}(\go*\HH)$.

The proof of Theorem~\ref{thm:Main} occupies the entire next section.
Since the proof is a bit involved, we give a brief overview here in
the hope that it will motivate some of the efforts in the next section.
(The basic outline follows the proof of the
Gootman-Rosenberg-Sauvageot Theorem as proved in
\cite{wil:crossed}*{Chap.~9}.)

We start by fixing $K\in \Prim \cs(G)$ and letting $R$ be an
irreducible representation such that $\ker R=K$.  We assume that $R$
is the integrated form of a unitary representation $(\mu,\go*\HH,V)$.
Since $V$ defines via restriction a representation $r_{u}$ of each
stability group $G(u)=\set{\gamma\in G:r(\gamma)=u=s(\gamma)}$, and
since we can view each $r_{u}$ as a representation of the \cs-algebra
$\cs(\Sigma)$ of the group bundle $\Sigma$ associated to the
collection $\Sigma^{(0)}$ of closed subgroups of $G$, we can form the
direct integral representation
\begin{equation*}
  r:=\dint_{\go}r_{u}\,d\mu(u)
\end{equation*}
of $\cs(\Sigma)$.  We call $r$ the restriction of $R$ to the isotropy
groups of $G$.

A key step is to observe that $(r,V)$ is a covariant representation of
a groupoid dynamical system $\bigl(\cs(\Sigma),G,\alpha\bigr)$ for a
natural action $\alpha$.  Then we can form the representation
$\Lpp=r\rtimes V$ of $\cs(\Sigma)\rtimes_{\alpha}G$.  This allows us
to invoke Renault's impressive \cite{ren:jot91}*{Theorem~2.2} which is
a groupoid equivariant version of Effros's ideal center decomposition
theorem from \cite{eff:tams63} (for more on Effros's result, see
\cite{wil:crossed}*{Appendix~G}).  This allows us to show that $r$ is
equivalent to a representation
\begin{equation*}
  \tr:=\dint_{\primcss}\tr_{P}\,d\nu(P),
\end{equation*}
where each $\tr_{P}$ has kernel $P$, and $\nu$ is a measure on
$\primcss$.  Moreover, $\primcss$ is a right $G$-space for the action
of $G$ induced by $\alpha$, and
\cite{ren:jot91}*{Theorem~2.2} implies that $\nu$ is quasi-invariant
when $\primcss$ is viewed as the unit space of the transformation
groupoid $\G=\primcss*G$.  (Although
$\G$ is not a locally compact groupoid, it is a Borel groupoid with a
Borel Haar system so the definition of quasi-invariant makes perfectly
good sense.)
We then need to work a bit to see that $\nu$ is also ergodic with
respect to the $G$-action on $\primcss$.  

We then define an induced representation $\operatorname{ind}\tr$ of
$\cs(G)$.  As essential component of the proof is
Proposition~\ref{prop-prim-ind} where we use the quasi-invariance and
ergodicity of $\nu$ to show that the kernel of $\operatorname{ind}\tr$
is an induced primitive ideal.  This is a generalization of
Sauvageot's \cite{sau:ma77}*{Lemma~5.4} where the corresponding result
for transformations groups is proved.  Then the final step in our
proof is to observe that $\operatorname{ind}\tr$ is equivalent to the
induced representation $\Ind\hat L$ used by Renault in
\cite{ren:jot91}.  Then we can invoke the deep results in
\cite{ren:jot91} to show, when $G$ is suitably amenable, that
\begin{equation*}
  K=\ker R=\ker(\Ind\hat L).
\end{equation*}
Since $\ker(\operatorname{ind}\tr)=\ker(\Ind\hat L)$ and since
$\ker(\operatorname{ind}\tr)$ is induced, this shows $K$ is induced
and completes the proof.

\section{The Proof of the Main Theorem}
\label{sec:proof-main-theorem}

In this section we present the details of the proof of
Theorem~\ref{thm:Main}. As in the statement of the theorem, $G$ will
always denote a second countable locally compact Hausdorff groupoid
endowed with a Haar system $\set {\lambda^{u}}_{u\in\go }$.

Let $K$ be a primitive ideal in $\Prim \cs(G)$.  Using Renault's
disintegration theorem, we can find an irreducible representation $R$
such that $\ker R=K$ \emph{and} such that $R$ is the integrated form
of a representation $(\mu,\go*\HH,V)$ of $G$.

We let $\so$ be the space of closed subgroups of $G$ equipped with the
Fell topology whose basic open sets are of the form
\begin{equation*}
  \mathcal{U}(K;U_{1},\dots,U_{n})=\set{H\in\so:\text{$H\cap
      K=\emptyset$ and $H\cap U_{i}\not=\emptyset$ for $i=1,2,\dots,n$}},
\end{equation*}
where $K\subset G$ is compact and each $U_{i}\subset G$ is open
(cf. \cite{wil:crossed}*{Appendix~H.1}).  Although $\so$ need not be
compact as in the group case, $\so\cup \set\emptyset$ is compact in
the space of closed subsets of $G$.  Hence, $\so$ is locally compact
Hausdorff.  Furthermore the map $p:\so\to\go$ given by $p(H)=u$ if
$r(H)=\set u=s(H)$ is continuous, and if $K\subset \go$ is compact,
then $p^{-1}(K)\cap \so$ is compact.  That is, $\so$ is conditionally
compact over $\go$ \cite{ren:jot91}*{\S1}.  We let $\Sigma$ be the
associated group bundle over $\so$:
\begin{equation*}
  \Sigma=\set{(u,H,\sigma):\text{$u=p(H)$ and $\gamma\in H$}}.
\end{equation*}
(The elements of $\Sigma$ have been written slightly redundantly to
make some of the subsequent computations easier to follow.)  Notice
that if $(u,H,\sigma)\in\Sigma$, then $H\subset G(u)$.  By
\cite{ren:jot91}*{Corollary~1.4}, there is a continuous Haar system
$\set{\beta^{H}}_{H\in\so}$ for $\Sigma$.

We want to define an action $\alpha$ of $G$ on $\cs(\Sigma)$ so that
$\bigl(\cs(\Sigma),G,\alpha\bigr)$ is a groupoid dynamical system.
We start by showing that $\cs(\Sigma)$ is a $C_{0}(\go)$-algebra
(cf. \cite{wil:crossed}*{Definition~C.1}).  For this, the following
variation on \cite{ren:jot91}*{Lemma~1.6} will be helpful.
\begin{lemma}
  \label{lem-omega}
  Let $G*\so=\set{(\sigma,H)\in G\times\so:s(\sigma)=p(H)}$.  Then
  there is a continuous map $\omega:G*\so\to(0,\infty)$ such that
  \begin{equation}\label{eq:8}
    \int_{H}f(\sigma\gamma\sigma^{-1})\,d\beta^{H}(\gamma) =
    \omega(\sigma,H) \int_{\sigma\cdot H}
    f(\gamma)\,d\beta^{\sigma\cdot H}(\gamma)\quad\text{for all $f\in
      C_{c}(G)$.} 
  \end{equation}
  Furthermore, for all $\sigma,\tau\in G$ and $H\in\so$, we have
  \begin{equation}\label{eq:9}
    \omega(\sigma\tau,H)=\omega(\tau,H)\omega(\sigma,\tau\cdot
    H)\quad\text{and} \quad
    \omega(\sigma,H)^{-1}=\omega(\sigma^{-1},\sigma\cdot H).
  \end{equation}
\end{lemma}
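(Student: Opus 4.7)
The plan is to define $\omega$ pointwise using the uniqueness of Haar measure, deduce the cocycle identities in \eqref{eq:9} from functoriality of conjugation, and prove continuity by expressing $\omega$ locally as a quotient of two continuous integrals.

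For each $(\sigma,H)\in G*\so$ the conjugation map $c_\sigma:\gamma\mapsto\sigma\gamma\sigma^{-1}$ is a topological isomorphism of locally compact groups from $H$ onto $\sigma\cdot H:=\sigma H\sigma^{-1}$, and $\sigma\cdot H$ lies in $\so$ with $p(\sigma\cdot H)=r(\sigma)$. Hence $(c_\sigma)_{*}\beta^{H}$ is a left Haar measure on $\sigma\cdot H$, and by uniqueness there is a unique $\omega(\sigma,H)\in(0,\infty)$ with $(c_\sigma)_{*}\beta^{H}=\omega(\sigma,H)\beta^{\sigma\cdot H}$. Pairing this equality with $f\in C_c(G)$ gives precisely \eqref{eq:8}. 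The identities in \eqref{eq:9} are then immediate from uniqueness together with the relations $c_{\sigma\tau}=c_\sigma\circ c_\tau$ and $c_{\sigma^{-1}}=c_\sigma^{-1}$ (and the fact that $\omega(p(H),H)=1$).

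The substantive step is continuity of $\omega$. Fix $(\sigma_0,H_0)$ and choose a nonnegative $f\in C_c(G)$ with $\int_{\sigma_0\cdot H_0}f\,d\beta^{\sigma_0\cdot H_0}>0$; such an $f$ exists since $\beta^{\sigma_0\cdot H_0}$ is a nontrivial Radon measure. The right-hand side of \eqref{eq:8} equals $\omega(\sigma,H)\phi(\sigma,H)$, where $\phi(\sigma,H):=\int_{\sigma\cdot H}f\,d\beta^{\sigma\cdot H}$ is continuous because $(\sigma,H)\mapsto\sigma\cdot H$ is continuous on $G*\so$ and $\{\beta^{H}\}$ is a continuous Haar system (a cutoff $\chi\in C_c(\so)$ using the conditional compactness of $\so$ over $\go$ reduces $f$ to an element of $C_c(\Sigma)$). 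The left-hand side $\psi(\sigma,H):=\int_{H}f(\sigma\gamma\sigma^{-1})\,d\beta^{H}(\gamma)$ is continuous by the same mechanism applied to the jointly continuous function $(\sigma,H,\gamma)\mapsto f(\sigma\gamma\sigma^{-1})$. Since $\phi(\sigma_0,H_0)>0$, $\phi$ is bounded away from zero on a neighborhood of $(\sigma_0,H_0)$, where $\omega=\psi/\phi$ is continuous.

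The main obstacle is the joint continuity of $\psi$, because the defining property of a continuous Haar system fixes a single test function and varies $H$, whereas here the integrand $\gamma\mapsto f(\sigma\gamma\sigma^{-1})$ also depends on $\sigma$. I would handle this by a standard $\epsilon/2$ decomposition, bounding $|\psi(\sigma,H)-\psi(\sigma_0,H_0)|$ via $\psi(\sigma_0,H)$: control the first difference using uniform continuity of $(\sigma,\gamma)\mapsto f(\sigma\gamma\sigma^{-1})$ on compacta together with a local uniform bound on $\beta^{H}(K)$ for a fixed compact $K\subset G$ (which follows from the upper semicontinuity built into a continuous Haar system), and control the second difference by direct application of Haar-system continuity to $\gamma\mapsto f(\sigma_0\gamma\sigma_0^{-1})$ after a suitable cutoff to produce a function in $C_c(\Sigma)$. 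This is the same parametric mechanism already employed in \cite{ren:jot91}*{Lemma~1.6}.
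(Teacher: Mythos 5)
Your proposal follows the same route as the paper's (sketched) proof: existence of $\omega(\sigma,H)$ from uniqueness of Haar measure on $\sigma\cdot H$, the identities \eqref{eq:9} from functoriality of conjugation, and continuity from the joint continuity in $(\sigma,H)$ of both integrals in \eqref{eq:8}, writing $\omega$ locally as their quotient against a test function with nonvanishing denominator. You supply more detail than the paper does (the $\epsilon/2$ argument for the left-hand integral and the cutoff to land in $C_{c}(\Sigma)$), but the argument is the same and is correct.
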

\begin{proof}
  [Sketch of the Proof] The existence of $\omega(\sigma,H)$ follows
  from the uniqueness of the Haar measure on $\sigma\cdot H$. The fact
  that $\omega$ is continuous follows from the fact that both
  integrals in \eqref{eq:8} are continuous with respect to
  $(\sigma,H)$.  Equation \eqref{eq:9} is a straightforward
  computation.
\end{proof}

For $u\in \go$, let $\Sigma_{G(u)}$ the compact Hausdorff space of
subgroups of $G(u)$.  Let $\cs(\Sigma_{G(u)})$ be the groupoid
\cs-algebra of the corresponding group bundle.  Thus if
$\Sigma_{G(u)}*G=\set{(H,\gamma)\in\so\times G:\gamma\in H}$, then
$\cs(\Sigma_{G(u)})$ is the completion of $C_{c}(\Sigma_{G(u)}*G) $ in
the obvious universal norm for the $*$-algebra structure given by
\begin{equation*}
  f*g(H,\gamma) 
  =\int_{H}f(H,\eta)g(H,\eta^{-1}\gamma)d\beta^{H}(\eta)
  \quad\text{and}\quad 
  f^{*}(H,\gamma)  =f(H,\gamma^{-1})^{*}.
\end{equation*}
Since the restriction map, $\kappa_{u}:C_{c}(\Sigma)\to
C_{c}(\Sigma_{G(u)})$ is surjective (by
\cite{wil:crossed}*{Lemma~8.54}), $\kappa_{u}$ extends to a
homomorphism of $\cs(\Sigma)$ onto $\cs(\Sigma_{G(u)})$.

\begin{remark}
  \label{rem-groupoid-vs-group}
  Notice that $\cs(\Sigma_{G(u)})$ is (isomorphic to) Fell's subgroup
  \cs-algebra as originally defined in \cite{fel:tams64} (or as a
  special case of \cite{wil:crossed}*{\S8.4}).  It is important to
  note that, since we are treating $\Sigma_{G(u)}*G$ as a groupoid,
  there are no modular functions in the formula above for the adjoint
  in contrast to the definitions in \cite{fel:tams64} or
  \cite{wil:crossed}.
\end{remark}

\begin{lemma}
  \label{lem-cox-alg}
  The groupoid $C^{*}$-algebra $C^{*}(\Sigma)$ is a
  $C_{0}(G^{(0)})$-algebra.  Moreover the fiber $C^{*}(\Sigma)(u)$
  over $u$ is isomorphic to $C^{*}(\Sigma_{G(u)})$.
\end{lemma}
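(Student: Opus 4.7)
The plan is to define a $C_0(\go)$-action on $C^*(\Sigma)$ by pulling functions back along the continuous map $p:\so\to\go$. For $\phi\in C_0(\go)$ and $f\in C_c(\Sigma)$, set
\[ (\phi\cdot f)(u,H,\sigma) := \phi(p(H))\,f(u,H,\sigma), \]
which equals $\phi(u)\,f(u,H,\sigma)$ because $u=p(H)$ on $\Sigma$. Since $p(H)$ is constant as $\sigma$ varies over $H$, this multiplication commutes with both the convolution and the involution on $C_c(\Sigma)$, and a direct $I$-norm estimate shows it extends to a central, nondegenerate $*$-homomorphism $C_0(\go)\to ZM(C^*(\Sigma))$; nondegeneracy follows by choosing $\phi\in C_c(\go)$ equal to $1$ on the compact set $p(\supp f)$.

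Next I would identify the fiber $C^*(\Sigma)(u) := C^*(\Sigma)/I_u$, where $I_u := \overline{C_0(\go\setminus\{u\})\cdot C^*(\Sigma)}$. Because $\kappa_u(\phi\cdot f) = \phi(u)\kappa_u(f)$, the restriction homomorphism $\kappa_u : C^*(\Sigma)\to C^*(\Sigma_{G(u)})$ annihilates $I_u$ and descends to a surjective $*$-homomorphism $\pi_u : C^*(\Sigma)(u)\to C^*(\Sigma_{G(u)})$ (surjectivity being inherited from that of $\kappa_u$ at the $C_c$ level). The main obstacle is injectivity of $\pi_u$, equivalently the inclusion $\ker\kappa_u\subseteq I_u$.

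For injectivity, let $f\in C_c(\Sigma)$ satisfy $\kappa_u(f)=0$, so that $f$ vanishes on the fiber $p^{-1}(u)$. Given $\epsilon>0$, uniform continuity of $f$ on $\supp f$ together with continuity of $p$ yields a neighborhood $U\ni u$ in $\go$ such that $|f(v,H,\sigma)|<\epsilon$ whenever $p(H)\in U$. Pick $\phi\in C_c(\go)$ with $\phi(u)=0$, $0\le\phi\le 1$, and $\phi\equiv 1$ off a smaller neighborhood $U'\subset U$. Then $g:=\phi\cdot f\in I_u$, while $f-g$ is supported where $p(H)\in U$ and therefore satisfies $|f-g|<\epsilon$. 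Since $\supp f$ is compact in $\Sigma$ and $\set{\beta^H}$ is a continuous Haar system, there is a uniform bound $M:=\sup_H\beta^H\bigl(\set{\sigma\in H:(p(H),H,\sigma)\in\supp f}\bigr)<\infty$, obtained by majorizing $\charfcn{\supp f}$ by a continuous compactly supported cutoff. Consequently $\|f-g\|_I\le\epsilon M$, and since the $C^*$-norm is dominated by the $I$-norm, $\|f-g\|_{C^*(\Sigma)}\le\epsilon M$. Letting $\epsilon\to 0$ places $f$ in $I_u$, so $\pi_u$ is an isomorphism, establishing both the $C_0(\go)$-algebra structure and the fiber identification $C^*(\Sigma)(u)\cong C^*(\Sigma_{G(u)})$.
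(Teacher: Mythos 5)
Your construction of the $C_{0}(\go)$-structure via $p$ and the reduction to the inclusion $\ker\kappa_{u}\subseteq I_{u}$ both match the paper. The cutoff estimate is also essentially sound (modulo the slip that a $\phi\in C_{c}(\go)$ cannot be identically $1$ off a neighborhood of $u$ unless $\go$ is compact; you need to multiply by a compactly supported function equal to $1$ on $p(\supp f)$ to land in $C_{0,u}(\go)\cdot C_{c}(\Sigma)$). What your argument actually proves is that every $f\in C_{c}(\Sigma)$ vanishing on the fibre over $u$ lies in $I_{u}$, i.e.\ $\ker\kappa_{u}\cap C_{c}(\Sigma)\subseteq I_{u}$.

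The gap is the final sentence: this does \emph{not} yield $\ker\kappa_{u}\subseteq I_{u}$, hence does not show $\pi_{u}$ is injective. A surjective $*$-homomorphism of $C^{*}$-algebras that is injective on (the image of) a dense $*$-subalgebra need not be injective --- the canonical surjection $\cs(F_{2})\to C^{*}_{r}(F_{2})$ is injective on the group algebra but has nontrivial kernel --- and there is no a priori reason that $\ker\kappa_{u}\cap C_{c}(\Sigma)$ is dense in $\ker\kappa_{u}$. What is really needed is the norm inequality $\|f+I_{u}\|\le\|\kappa_{u}(f)\|_{\cs(\Sigma_{G(u)})}$ for $f\in C_{c}(\Sigma)$, and your estimate only delivers a bound by (roughly) the sup of $\|\iota_{H}(f)\|_{1}$ over $H\in\Sigma_{G(u)}$, i.e.\ by the $I$-norm of the restriction, which dominates the $C^{*}$-norm rather than being dominated by it. The paper closes exactly this gap: given any representation $L$ with $I_{u}\subseteq\ker L$, the cutoff estimate gives $\|L(f)\|\le\|\kappa_{u}(f)\|_{I}$, so $L$ factors through an $I$-norm decreasing representation $L'$ of $C_{c}(\Sigma_{G(u)})$, and the \emph{universal property} of $\cs(\Sigma_{G(u)})$ then forces $\|L(f)\|\le\|\kappa_{u}(f)\|_{\cs(\Sigma_{G(u)})}$. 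That appeal to universality is the missing idea; without it your argument stops one step short of the fibre identification.
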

\begin{proof}
  The groupoid $C^{*}$-algebra $C^{*}(\Sigma)$ is clearly a
  $C_{0}(\Sigma^{(0)})$-algebra, and as in the proof of
  \cite{wil:crossed}*{Proposition~8.55}, it is not hard to check that
  the fibre $\cs(\Sigma)(H)$ over $H$ is isomorphic to $\cs(H)$.  In
  particular, the restriction map $\iota_{H}:C_{c}(\Sigma)\to
  C_{c}(H)$ is surjective and extends to a homomorphism of
  $\cs(\Sigma)$ onto $\cs(H)$.
 
  By composing functions on $\go$ with $p$, we see that $\cs(\Sigma)$
  is also a $C_{0}(G^{(0)})$-algebra. Let $u\in G^{(0)}$ and let
  $I_{u}$ be the ideal of $C^{*}(\Sigma)$ spanned by
  $C_{0,u}(G^{(0)})\cdot C_{c}(\Sigma)$, where $C_{0,u}(G^{(0)})$
  consists of the functions $f$ in $C_{0}(G^{(0)})$ such that
  $f(u)=0$. Then $C^{*}(\Sigma)(u)=C^{*}(\Sigma)/I_{u}$.  Clearly
  $I_{u}\subset\ker\kappa_{u}$. To show that $C^{*}(\Sigma)(u)$ is
  isomorphic to $C^{*}(\Sigma_{G(u)})$ it is enough to prove that
  $I_{u} \supset\ker\kappa_{u}$.

  Let $L$ be a representation of $\cs(\Sigma)$ such that $I_{u}\subset
  \ker L$.  An approximation argument shows that if $f\in
  C_{c}(\Sigma)$ is such that $f(u,H,\gamma)=0$ for all $H\in G(u)$
  and $\gamma\in H$, then $f\in \ker L$.  Therefore if $\phi\in
  C_{0}(\Sigma)$ is such that $\phi(H)=1$ for all $H\in
  \Sigma_{G(u)}$, then $L(f)=L(\phi\cdot f)$.

  We can view $\Sigma_{G(u)}$ as a \emph{compact} subset of
  $\Sigma$. Since
  \begin{equation*}
    H\mapsto\int_{H}f(\gamma,H)\, d\beta^{H}(\gamma)
  \end{equation*}
  is continuous on $\Sigma$, for any $\epsilon>0$ we can find $\phi\in
  C_{0}(\Sigma)$ such that $\phi(H)=1$ for all $H\in\Sigma_{G(u)}$ and
  such that
  \begin{equation*}
    \|\phi\cdot
    f\|_{\cs(\Sigma)}\le\sup_{H\in\Sigma_{G(u)}}\|\iota_{H}(f)\|_{1}+\epsilon,
  \end{equation*}
  where $\iota_{H}:C_{c}(\Sigma)\to C_{c}(H)$ is the restriction
  map. It follows that
  \begin{equation*}
    \|L(f)\|\le
    \sup_{H\in\Sigma_{G(u)}}\|\iota_{H}(f)\|_{1}\le \|\kappa_{u}(f)\|_{I},
  \end{equation*}
  where $\|\cdot\|_{I}$ is the $I$-norm on
  $C_{c}(\Sigma_{G(u)}*G)\subset \cs(\Sigma_{G(u)})$.  Thus we can
  define a $\|\cdot\|_{I}$-decreasing representation $L'$ of
  $C_{c}(\Sigma_{G(u)})$ by $L'(\kappa_{u}(f)):=L(f)$. Since $L'$ must
  be norm decreasing for the \cs-norm, we have
  \begin{equation*}
    \|L(f)\|\le\|\kappa_{u}(f)\|_{\cs (\Sigma_{G(u)})},
  \end{equation*}
  and $\ker\kappa_{u}\subset \ker L$. Since $L$ is any representation
  with $I_{u}$ in its kernel, we have $\ker\kappa_{u}\subset I_{u}$.
\end{proof}

To define an action $\alpha$ of $G$ on $\cs(\Sigma)$ using
\cite{muhwil:nyjm08}*{Definition~4.1}, we first define
\begin{equation*}
  \alpha_{\eta}:\cs(\Sigma) \bigl(s(\eta)\bigr)\to
  \cs(\Sigma)\bigl(r(\eta)\bigr) 
\end{equation*}
at the level of functions by
\begin{equation}
  \label{eq:11}
  \alpha_{\eta}(F)\bigl(r(\gamma),H,\gamma\bigr):=
  \omega(\eta^{-1},H)^{-1} F\bigl(s(\eta),\eta^{-1}\cdot
  H,\eta^{-1}\gamma\eta\bigr). 
\end{equation}
Then we compute that
\begin{align}
  \int_{H}\alpha_{\eta}(F)\bigl(r(\eta),H&,\gamma\bigr)\,
  d\beta^{H}(\gamma) \nonumber \\
  &=\omega(\eta^{-1},H)^{-1}\int_{H}F\bigl(s(\eta),\eta^{-1}\cdot
  H,\eta^{-1}\gamma\eta\bigr)\, d\beta^{H}(\gamma)\label{eq:12}\\
  & =\int_{\eta^{-1}\cdot H}F\bigl(s(\eta),\eta^{-1}\cdot
  H,\gamma\bigr)\, d\beta^{\eta^{-1}\cdot H}(\gamma).\nonumber
\end{align}

\begin{lem}
  The triple $(C^{*}(\Sigma),G,\alpha)$ is a groupoid dynamical
  system.
\end{lem}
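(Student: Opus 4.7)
The plan is to verify each axiom of a groupoid dynamical system in the sense of \cite{muhwil:nyjm08}*{Definition~4.1}: that the $\alpha_{\eta}$ defined in \eqref{eq:11} are $*$-isomorphisms between the appropriate fibres, that they satisfy the cocycle identity $\alpha_{\eta\tau}=\alpha_{\eta}\alpha_{\tau}$ together with $\alpha_{u}=\mathrm{id}$ for $u\in\go$, and that the global map $(\eta,a)\mapsto\alpha_{\eta}(a)$ is continuous on the pullback $G*\cs(\Sigma)$ with respect to the upper semicontinuous bundle structure coming from Lemma~\ref{lem-cox-alg}.

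First I would verify that \eqref{eq:11} defines a $*$-algebra homomorphism on $C_{c}(\Sigma_{G(s(\eta))}*G)\to C_{c}(\Sigma_{G(r(\eta))}*G)$. The involution is immediate: since no modular function appears in the adjoint on the groupoid side (see Remark~\ref{rem-groupoid-vs-group}), checking $\alpha_{\eta}(F^{*})=\alpha_{\eta}(F)^{*}$ reduces to observing that $\omega(\eta^{-1},H)$ is real and positive. Multiplicativity is the main bookkeeping step. Writing out $\alpha_{\eta}(F)*\alpha_{\eta}(G)(H,\gamma)$ as an integral against $\beta^{H}$, one applies the substitution $\sigma\mapsto\eta^{-1}\sigma\eta$, which transports $\beta^{H}$ onto $\beta^{\eta^{-1}\cdot H}$ with Jacobian $\omega(\eta^{-1},H)^{-1}$ by \eqref{eq:8}, and a short manipulation produces $\omega(\eta^{-1},H)^{-1}(F*G)(\eta^{-1}\cdot H,\eta^{-1}\gamma\eta)=\alpha_{\eta}(F*G)(H,\gamma)$. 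A similar use of \eqref{eq:8} shows that $\alpha_{\eta}$ is $\|\cdot\|_{I}$-decreasing; combined with $\alpha_{\eta^{-1}}$ giving an inverse (see below) this yields isometry and hence an extension to a $*$-isomorphism $\cs(\Sigma)(s(\eta))\to\cs(\Sigma)(r(\eta))$ after identifying the fibres via Lemma~\ref{lem-cox-alg}.

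Next I would check the groupoid axioms at the algebraic level. For $u\in\go$ one has $\omega(u,H)=1$ for $H\in\Sigma_{G(u)}$ and $u\cdot H=H$, giving $\alpha_{u}=\mathrm{id}$. The cocycle identity $\alpha_{\eta\tau}=\alpha_{\eta}\alpha_{\tau}$ for composable $\eta,\tau$ is a direct substitution into \eqref{eq:11}: one gets a product $\omega(\eta^{-1},H)^{-1}\omega(\tau^{-1},\eta^{-1}\cdot H)^{-1}$, which by \eqref{eq:9} applied to $\omega(\tau^{-1}\eta^{-1},H)$ collapses to $\omega((\eta\tau)^{-1},H)^{-1}$, and the conjugation $\tau^{-1}\eta^{-1}\gamma\eta\tau=(\eta\tau)^{-1}\gamma(\eta\tau)$ matches on the function argument. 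In particular $\alpha_{\eta^{-1}}=\alpha_{\eta}^{-1}$, which justifies that each $\alpha_{\eta}$ is a $*$-isomorphism of fibres.

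The main obstacle is the continuity requirement. I would argue as follows: for a fixed $F\in C_{c}(\Sigma)$, the section $\eta\mapsto \alpha_{\eta}\bigl(\kappa_{s(\eta)}(F)\bigr)$ is represented by the function
\begin{equation*}
  (\eta,H,\gamma)\longmapsto \omega(\eta^{-1},H)^{-1}F\bigl(s(\eta),\eta^{-1}\cdot H,\eta^{-1}\gamma\eta\bigr)
\end{equation*}
on $G*\Sigma$, which is continuous because $\omega$ is continuous by Lemma~\ref{lem-omega}, because the $G$-action on $\so$ is continuous (as both sides of \eqref{eq:8} vary continuously in $(\sigma,H)$), and because conjugation is continuous on $G$. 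Since $\cs(\Sigma)$ is a $C_{0}(\go)$-algebra with fibre-wise surjections $\kappa_{u}$ from $C_{c}(\Sigma)$ onto dense subalgebras of the $\cs(\Sigma_{G(u)})$, standard $C_{0}(\go)$-algebra arguments (e.g.\ via approximation by elements of the form $\phi\cdot F$ as in the proof of Lemma~\ref{lem-cox-alg}) upgrade this pointwise continuity to continuity of $(\eta,a)\mapsto\alpha_{\eta}(a)$ on $G*\cs(\Sigma)$, which is exactly the remaining requirement for $(\cs(\Sigma),G,\alpha)$ to be a groupoid dynamical system in the sense of \cite{muhwil:nyjm08}*{Definition~4.1}.
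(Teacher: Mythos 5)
Your proposal is correct and follows essentially the same route as the paper: the algebraic verifications (multiplicativity via the substitution governed by \eqref{eq:8}, the cocycle identity via \eqref{eq:9}, and $I$-norm isometry) are exactly the computations the paper relies on, and your continuity argument via the continuous function $(\eta,H,\gamma)\mapsto\omega(\eta^{-1},H)^{-1}F\bigl(s(\eta),\eta^{-1}\cdot H,\eta^{-1}\gamma\eta\bigr)$ is the same device the paper uses, merely packaged there more explicitly as a $C_{0}(G)$-linear isomorphism $r^{*}\cs(\Sigma)\to s^{*}\cs(\Sigma)$ invoking the criterion of \cite{muhwil:nyjm08}*{Lemma~4.3}. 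The only stylistic difference is that the paper identifies the pullbacks $r^{*}\cs(\Sigma)$ and $s^{*}\cs(\Sigma)$ with \cs-algebras of group bundles over $G$ before writing down the same formula, which makes the ``upgrade'' step you describe automatic.
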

\begin{proof}
  The preceding discussion show that $\alpha_{\eta}$ is isometric for
  the $I$-norm, and hence defines an isomorphism. The fact that
  $\alpha_{\gamma\delta}=\alpha_{\gamma}\circ\alpha_{\delta}$ is clear
  by equation \eqref{eq:11}.  To see that
  $\alpha=\set{\alpha_{\eta}}_{\eta\in G}$ is continuous is a bit
  messy.  We'll use the criteria from \cite{muhwil:nyjm08}*{Lemma~4.3}
  and show that there is a $C_{0}(G)$-linear isomorphism
  \begin{equation*}
    \alpha:r^{*}\cs(\Sigma)\to s^{*}\cs(\Sigma)
  \end{equation*}
  which induces the $\alpha_{\eta}$ on the fibres.  It is not hard to
  establish that the pull-back $r^{*}\cs(\Sigma)$ is
  ($C_{0}(G)$-isomorphic to) the \cs-algebra of the group bundle
  $G*_{r}\Sigma^{(0)}*G=\set{(\eta,H,\gamma):\gamma\in H\subset
    G\bigl(r(\eta) \bigr)}$, and similarly for $s^{*}\cs(\Sigma)$.
  Then we can define
  \begin{equation*}
    \alpha:C_{c}(G*_{r}*\Sigma^{(0)}*G)\to C_{c}(G*_{s}*\Sigma^{(0)}*G)
  \end{equation*}
  by
  \begin{equation*}
    \alpha(f)(\eta,H,\gamma)=
    \omega(\eta^{-1},H)^{-1}f(\eta,\eta^{-1}\cdot 
    H,\eta^{-1}\gamma\eta) .
  \end{equation*}
  Then $\alpha$ is isometric with respect to the appropriate $I$-norms
  and therefore extends to a $C_{0}(G)$-linear isomorphism which
  induces the $\alpha_{\eta}$ as required.\end{proof}

\subsection{Restriction to the Stability Groups}
\label{sec:restr-stab-groups}

We maintain the set-up that $R$ is an irreducible representation with
$\ker R=K\in\Prim\cs(\Sigma)$, and that $R$ is the integrated form of
a representation $(\mu,\go*\HH, V)$ of $G$.  Note that
$\cs(\Sigma)=\Gamma_{0}(\go;\css)$ for an upper semicontinuous
\cs-bundle $p_{\css}:\css\to \go$ (as in
\cite{wil:crossed}*{Theorem~C.26}).  Since $u\mapsto G(u)$ is Borel
\cite{ren:jot91}*{Lemma~1.5}, we can define the \emph{restriction of
  $R$ to the isotropy groups of $G$} to be the representation $r$ of
$\cs(\Sigma)$ on $L^{2}(\go*\HH,\mu)$ given by
\begin{equation}\label{eq:13}
  r(F)h(u):=\int_{G(u)}
  F\bigl(u,G(u),\gamma\bigr)V_{\gamma}h(u)\Delta_{G(u)}(\gamma)^{-\half}
  \,d\beta^{G(u)}(\gamma),
\end{equation}
where $\Delta_{G(u)}$ is the modular function on $G(u)$ (see
Remark~\ref{rem-ru-groupoid} below).  It may be helpful to notice that
$r$ is the direct integral
\begin{equation}\label{eq:5}
  r=\dint_{\go}r_{u}\,d\mu(u),
\end{equation}
where $r_{u}$ is the composition of the representation of
$\cs\bigl(G(u)\bigr)$ given by $V\restr{G(u)}$ with the quotient map
$\kappa_{u}$ of $\cs(\Sigma)$ onto $\cs\bigl(G(u)\bigr)$.  (We will
also write $r_{u}$ for the representation of $\cs\bigl(G(u)\bigr)$.)

\begin{remark}
  \label{rem-ru-groupoid}
  Some care is necessary when applying $r_{u}$ to a function in $F\in
  C_{c}(\Sigma)$ --- or for that matter, $C_{c}\bigl(G(u)\bigr)$.
  Since $\Sigma$ is a groupoid, there is no modular function in the
  formula for the adjoint; instead, it must appear in the integrated
  form of representations in order that they be $*$-preserving. This
  ``explains'' the appearance of the group modular functions in
  \eqref{eq:13}.  In fact, $\deltau(u,\gamma)=\Delta_{G(u)}(\gamma)$
  is the Radon-Nikodym derivative of $\mu\circ\beta$ with respect to
  $\mu\circ\beta^{-1}$ associated to $\mu$ considered as a
  \emph{quasi-invariant} measure on $\go$ viewed as the unit space of
  the Borel groupoid group bundle $G'=\set{(u,\gamma):\gamma\in
    G(u)}$.  This observation will be important at the end of
  Section~\ref{sec:induc-repr}.
\end{remark}

\begin{lem}
  \label{lem-cov}
  The tuple $(r,\mu,\go*\HH, V )$ is a covariant representation of
  $\bigl(C^{*}(\Sigma),G,\alpha\bigr)$
  \cite{muhwil:nyjm08}*{Definition~7.9}. In fact,
  $V_{\gamma}r_{s(\gamma)}=(r_{r(\gamma)}\circ
  \alpha_{\gamma})V_{\gamma}$ for \emph{all} $\gamma\in G$.
\end{lem}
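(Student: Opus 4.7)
The plan is to verify the stronger pointwise identity $V_{\gamma}r_{s(\gamma)}=(r_{r(\gamma)}\circ\alpha_{\gamma})V_{\gamma}$ by a direct calculation on a dense subalgebra (say $F\in C_{c}(\Sigma)$ and $h\in\H(u)$), then extend by continuity; covariance in the sense of \cite{muhwil:nyjm08}*{Definition~7.9} is then immediate (in fact, without having to discard a $\mu$-null set). Fix $\gamma\in G$ with $u=s(\gamma)$ and $v=r(\gamma)$.

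For the left-hand side, plug $F$ into \eqref{eq:13} applied at $u$ and then apply $V_{\gamma}$. Since $V$ is a groupoid homomorphism, $V_{\gamma}V_{\eta}=V_{\gamma\eta}$ whenever $\eta\in G(u)$, so
\begin{equation*}
V_{\gamma}r_{u}(F)h=\int_{G(u)}F\bigl(u,G(u),\eta\bigr)\,V_{\gamma\eta}h\,\Delta_{G(u)}(\eta)^{-\half}\,d\beta^{G(u)}(\eta).
\end{equation*}
For the right-hand side, apply \eqref{eq:13} at $v$ and use the definition \eqref{eq:11} of $\alpha_{\gamma}$, noting that $\gamma^{-1}\cdot G(v)=G(u)$. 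Writing $\zeta\in G(v)$ as $\zeta=\gamma\eta\gamma^{-1}$ with $\eta\in G(u)$, and using $V_{\zeta}V_{\gamma}=V_{\zeta\gamma}=V_{\gamma}V_{\gamma^{-1}\zeta\gamma}=V_{\gamma}V_{\eta}$, we obtain
\begin{equation*}
r_{v}(\alpha_{\gamma}F)V_{\gamma}h=\int_{G(v)}\omega(\gamma^{-1},G(v))^{-1}F\bigl(u,G(u),\gamma^{-1}\zeta\gamma\bigr)\,V_{\gamma}V_{\gamma^{-1}\zeta\gamma}h\,\Delta_{G(v)}(\zeta)^{-\half}\,d\beta^{G(v)}(\zeta).
\end{equation*}

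The core of the argument is to change variables from $\zeta$ to $\eta=\gamma^{-1}\zeta\gamma$. Applying Lemma~\ref{lem-omega} with $\sigma=\gamma$ and $H=G(u)$ (so $\sigma\cdot H=G(v)$) converts the integral over $G(v)$ to one over $G(u)$ at the cost of a factor $\omega(\gamma,G(u))^{-1}$, which by the cocycle identity \eqref{eq:9} equals $\omega(\gamma^{-1},G(v))$ and so cancels exactly the $\omega(\gamma^{-1},G(v))^{-1}$ built into $\alpha_{\gamma}$. The remaining piece is the modular factor: one needs $\Delta_{G(v)}(\gamma\eta\gamma^{-1})=\Delta_{G(u)}(\eta)$, which holds because the modular function is an invariant of the isomorphism type of a locally compact group and the map $\eta\mapsto\gamma\eta\gamma^{-1}$ is a topological isomorphism $G(u)\to G(v)$. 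After these two cancellations the right-hand side collapses to exactly the expression for $V_{\gamma}r_{u}(F)h$ displayed above.

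The calculation above is valid for every $\gamma\in G$, so the identity passes at once from $C_{c}(\Sigma)$ to $\cs(\Sigma)$ by continuity of $\alpha_{\gamma}$, $r_{u}$, and $r_{v}$. The main obstacle is purely bookkeeping: making sure the $\omega$-factor in \eqref{eq:11} matches the Jacobian of the conjugation change of variables in \eqref{eq:8}, and keeping track of the modular-function half-powers coming from Remark~\ref{rem-ru-groupoid}. Once these two ingredients (the $\omega$-cocycle identity and conjugation invariance of $\Delta$) are in place, everything is forced.
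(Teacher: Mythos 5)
Your proposal is correct and follows essentially the same route as the paper's proof: a direct computation on $C_{c}(\Sigma)$ using the homomorphism property of $V$, the change of variables from Lemma~\ref{lem-omega}, the cocycle identity \eqref{eq:9} to cancel the $\omega$-factor built into $\alpha_{\gamma}$, and the conjugation invariance $\Delta_{G(v)}(\gamma\eta\gamma^{-1})=\Delta_{G(u)}(\eta)$. The only differences are cosmetic (you match both sides rather than transforming one into the other, and your $u,v$ labels are swapped relative to the paper's).
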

\begin{proof}
  We compute as follows. Fix $\gamma\in G$ with $s(\gamma)=v$ and
  $r(\gamma)=u$. Then
  \begin{align*}
    V_{\gamma}r_{v}(F) & =V_{\gamma}\int_{G(v)}F(v,G(v),\eta)V_{\eta}
    \Delta_{G(v)}(\eta)^{-\half}\,
    d\beta^{G(v)}(\eta)\\
    &
    =\int_{G(v)}F(v,G(v),\eta)V_{\gamma\eta}\Delta_{G(v)}(\eta)^{-\half}
    \Delta_{G(v)}(\eta)^{-\half}\, d\beta^{G(v)}(\eta)\\ &
    =\omega\bigl(\gamma,G(v)\bigr)\int_{\gamma\cdot
      G(v)}F\bigl(v,G(v),\gamma^{-1 }\eta\gamma\bigr)V_{\eta\gamma}
    \Delta_{G(v)}(\gamma^{-1}\eta\gamma) \,
    d\beta^{\gamma\cdot G(v)}(\eta)\\
    \intertext{which, since $\gamma\cdot G(v)=G(u)$ and
      $\Delta_{G(v)}(\gamma^{-1}\eta\gamma)=\Delta_{G(u)}(\eta)$, is}
    & =\int_{G(u)}\omega\bigl(\gamma,\gamma^{-1}\cdot
    G(u)\bigr)F\bigl(v,\gamma^{-1}\cdot
    G(u),\gamma^{-1}\eta\gamma\bigr)V_{\eta}
    \Delta_{G(u)}(\eta)^{-\half}\, d\beta^{G(u)}V_{\gamma}\\
    & =\int_{G(u)}\alpha_{\gamma}(F)
    \bigl(u,G(u),\eta\bigr)V_{\eta}\Delta_{G(u)}(\eta)^{-\half}\,
    d\beta^{G(u)}(\eta)V_{\gamma}\\
    & =r_{u}\bigl(\alpha_{\gamma}(F)\bigr)V_{\gamma}.
  \end{align*}
  The result follows.
\end{proof}

In view of Lemma~\ref{lem-cov}, we can let $\Lpp:=r\rtimes V$ be the
representation of the groupoid crossed product
$\cs(\Sigma)\rtimes_{\alpha}G$ which is the integrated form of
$(r,\mu,\go*\HH, V)$ (see \cite{muhwil:nyjm08}*{Proposition~7.11}).
If $\delta$ is the Radon Nikodym derivative of $\mu\circ \lambda$ with
respect to $\mu\circ\lambda^{-1}$, then for each $f\in
\Gamma_{c}(G;r^{*}\css)$
\begin{equation*}
  \bip(\Lpp(f)h|k)=
  \int_{\go}\int_{G}\bip(r_{u}\bigl(f(\gamma)\bigr)V_{\gamma} 
  h\bigl(s(\gamma)\bigr)|{k(u)})
  \delta(\gamma)^{-\frac12}\,d\lambda^{u}(\gamma) 
  \, d\mu(u).
\end{equation*}

Now we want to form Effros's ideal center decomposition of $r$
following \cite{ren:jot91}*{Theorem~2.2}.\footnote{Formally, Renault's
  proofs need to be modified to deal with upper semicontinuous
  \cs-bundles, but this is straightforward.}  Let
$\sigma:\Prim\cs(\Sigma)\to \go$ be the continuous map induced by the
$C_{0}(\go)$-structure on $\cs(\Sigma)$
\cite{wil:crossed}*{Proposition~C.5}.  As in the discussion preceding
\cite{ren:jot91}*{Proposition~1.14}, there is a continuous $G$-action
on $\primcss$, equipped with its Polish \emph{regularized} topology,
with respect to $\sigma$; that is, there is a continuous map
$(P,\gamma)\mapsto P\cdot\gamma$ from
\begin{equation*}
  \Prim \cs(\Sigma)*G=\set{(P,\gamma):\sigma(P)=r(\gamma)}
\end{equation*}
to $\primcss$.\footnote{Notice that in many treatments of groupoid
  actions on spaces, the structure map $\sigma$ is assumed to be open
  as well as continuous.  In this case, we have only that $\sigma$ is
  continuous.  Since $\Prim A$ has the regularized topology, there is
  no reason to suspect that $\sigma$ need be open even if $\css$ were
  a continuous \cs-bundle in the first place.}  Then, as in the
paragraph following the proof of \cite{ren:jot91}*{Proposition~1.14},
we can form the transformation groupoid
\begin{equation*}
  \G:=\Prim \cs(\Sigma)*G,
\end{equation*}
where
\begin{align*}
  r(P,\gamma)&=P&s(P,\gamma)&=P\cdot \gamma\\
  (P,\gamma)(P\cdot\gamma,\eta)&
  =(P,\gamma\eta)&(P,\gamma)^{-1}&=(P\cdot\gamma,\gamma^{-1}).
\end{align*}
With respect to the regularized topology on $\primcss$, $\G$ is what
Renault calls in \cite{ren:jot91} \emph{locally conditionally compact}
--- the important thing is that it is a standard Borel groupoid, and
that $\lambdabar^{P}=\epsilon_{P}\times\lambda^{\sigma(P)}$ is a
continuous Haar system for $\G$ \cite{ren:jot91}*{p.~12}.  Using this
structure, we want to construct an \emph{covariant ideal center
  decomposition} of $\Lpp$ in analogy with
\cite{wil:crossed}*{Appendix~G.2}.  The idea is to use a decomposition
theorem of Effros's \cite{eff:tams63} to decompose $r$ into
homogeneous representations in an equivariant way
(cf. \cite{wil:crossed}*{Theorem~C.22}).  Recall that a representation
$\pi$ of a \cs-algebra $A$ is homogeneous if $\ker\pi=\ker \pi^{E}$
for any nonzero projection $E\in\pi(A)'$. (Here $\pi^{E}$ is the
subrepresentation of $\pi$ corresponding to $E$.)  It was Sauvageot
who first noticed the importance of Effros's ideal center
decomposition for the solution of the Effros-Hahn conjecture.  A key
feature for us is that if $A$ is separable and $\pi$ is homogeneous,
then $\ker\pi$ is primitive \cite{wil:crossed}*{Corollary~G.9}.
Renault provides the decomposition result we need in
\cite{ren:jot91}*{Theorem~2.2}.  The essential features of his result
are as follows: $\Lpp$ is equivalent to a representation $\Lp$ on
$L^{2}(\Prim\cs(\Sigma)*\KK,\nu)$ where $\nu$ is a quasi-invariant
measure on $\G^{(0)}=\Prim\cs(\Sigma)$, and $\primcss*\KK$ is a Borel
Hilbert bundle over $\Prim\cs(\Sigma)$.  To define $\Lp$, Renault must
produce a $\nu$-conull set $U\subset \Prim\cs(\Sigma)$ and a Borel
homomorphism
\begin{equation*}
  \hat{\Lp}:\G\restr U\to\operatorname{Iso}\bigl(\Prim\cs(\Sigma)*\KK\bigr)
\end{equation*}
of the form
$\hat{\Lp}(P,\gamma)=\bigl(P,L_{(P,\gamma)},P\cdot\gamma\bigr)$, and
for each $P\in U$, homogeneous representations $\tr_{P}$ of
$\cs(\Sigma)$ with $\ker\tr_{P}=P$ such that for all $F\in\cs(\Sigma)$
\begin{align}
  &P\mapsto \tr_{P}(F)\quad\text{is Borel, and}\label{eq:2}\\
  &L(P,\gamma)\tr_{P\cdot\gamma}(F)
  =\tr_{P}(\alpha_{\gamma}(F))L(P,\gamma) \quad\text{for all
    $(P,\gamma)\in\G\restr{U}$}. \label{eq:3}
\end{align}
Since $\ker\tr_{P}= P$, we can always view $\tr_{P}$ as a
representation of the fibre $C^{*}(\Sigma)\bigl(\sigma(P)\bigr)$.
Therefore, if $f\in \Gamma_{c}(G;r^{*}\css)$, then $(\gamma,P)\mapsto
\tr_{P}\bigl(f(\gamma)\bigr)$ is well-defined and Borel on the set of
$(P,\gamma)$ such that $\sigma(P)=r(\gamma)$.  (Recall that sections
of the form $\gamma\mapsto \phi(\gamma)a\bigl(r(\gamma)\bigr)$ are
dense in $\Gamma_{c}(G;r^{*}\css)$ in the inductive limit topology.)
The primary conclusion of \cite{ren:jot91}*{Theorem~2.2} is that
$\Lpp$ is equivalent to the representation defined by
\begin{equation*}
  \Lp(f)h(P)=\int_{G}
  \tr_{P}\bigl(f(\gamma)\bigr)L(P,\gamma)\Delta(P,\gamma)^{-\frac12}
  h(P\cdot\gamma) \,d\lambda^{\sigma(P)}(\gamma),
\end{equation*}
where $\Delta$ is the Radon-Nikodym derivative of
$\nu\circ\lambdabar^{-1}$ with respect to $\nu\circ\lambdabar$.  Then
$\Lp$ is what we meant by a covariant ideal center decomposition of
$\Lpp$ (see Remark~\ref{rem-icd} below and compare with
\cite{wil:crossed}*{Proposition~G.24 and Lemma~9.9}).

Notice that \eqref{eq:3} implies that
\begin{equation}
  \label{eq:1}
  \tr_{P\cdot\gamma}\cong \gamma\cdot \tr_{P}\quad\text{for all
    $(P,\gamma)\in \G\restr U$.}
\end{equation}

On the other hand, in view of \eqref{eq:2}, we can form the direct
integral representation of $\cs(\Sigma)$ given by
\begin{equation}\label{eq:4}
  \tr:=\dint_{\Prim\cs(\Sigma)}\tr_{P}\,d\nu(P).
\end{equation}

\begin{lemma}
  \label{lem:r_and_tilde_r}
  Let $R$ be an irreducible representation of $\cs(G)$ and suppose
  that $r$ and $\tr$ are the representations of $\cs(\Sigma)$ defined
  in \eqref{eq:5} and \eqref{eq:4}, respectively.  Then $r$ and $\tr$
  are equivalent.
\end{lemma}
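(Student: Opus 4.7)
The plan is to extract the equivalence $r\cong\tr$ from the construction underlying Renault's Theorem~2.2. That theorem is essentially an equivariant refinement of Effros's ideal center decomposition: its proof begins by applying Effros's theorem (see \cite{wil:crossed}*{Theorem~G.3}) to decompose $r$ into a Borel field of homogeneous representations indexed by $\primcss$, and only then equips this decomposition with the equivariance data $L(P,\gamma)$ and the quasi-invariance of $\nu$ needed to assemble $\Lp$. The equivalence $r\cong\tr$ is therefore built into the construction of $\Lp$ and should be extracted by forgetting the equivariance.

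Concretely, I would proceed in two steps. First, I would apply Effros's decomposition to the representation $r$ of the separable \cs-algebra $\cs(\Sigma)$, obtaining a measure $\nu_{0}$ on $\primcss$, a Borel Hilbert bundle $\primcss*\KK_{0}$, a Borel family of homogeneous representations $\set{\rho_{P}}$ with $\ker\rho_{P} = P$, and a unitary $U_{0}: L^{2}(\go*\HH,\mu) \to L^{2}(\primcss*\KK_{0},\nu_{0})$ implementing $r \cong \dint_{\primcss}\rho_{P}\,d\nu_{0}(P)$. Second, I would invoke the essential uniqueness of Effros's decomposition to identify $(\nu_{0},\KK_{0},\rho_{P})$ with the $(\nu,\KK,\tr_{P})$ produced by Renault, up to a $\nu$-measurable family of bundle unitaries. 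Composing $U_{0}$ with this identification yields the desired unitary $U:L^{2}(\go*\HH,\mu)\to L^{2}(\primcss*\KK,\nu)$ satisfying $Ur(F)=\tr(F)U$ for all $F\in\cs(\Sigma)$.

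The main obstacle is precisely the identification of Renault's data with the Effros data. Renault's Theorem~2.2 as stated asserts only the equivalence $\Lpp\cong\Lp$ of integrated forms; extracting $r\cong\tr$ requires tracing through Renault's proof to see that the intertwining unitary for the integrated forms is assembled from an Effros decomposition of $r$ and therefore already intertwines $r$ with $\tr$ before the operators $L(P,\gamma)$ enter the picture. The uniqueness in Effros's theorem applies to the measure class $[\nu]$ and the unitary equivalence class of the field $\set{\tr_P}$; once these are pinned down, the existence of the intertwining unitary $U$ is automatic and the lemma follows.
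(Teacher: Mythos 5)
Your proposed route has a genuine gap at exactly the point you flag as ``the main obstacle,'' and the device you propose for closing it is circular. To invoke the essential uniqueness of the ideal center decomposition you must already know that $\dint_{\primcss}\tr_{P}\,d\nu(P)$ \emph{is} an ideal center decomposition of $r$ --- that is, that $r\cong\tr$ --- because the uniqueness statement compares two decompositions \emph{of the same representation}. That is precisely the content of the lemma; the paper even records, in the remark immediately following it, that $\tr$ being an ideal center decomposition of $r$ is a \emph{consequence} of the lemma, not an input. The only alternative you offer is to ``trace through Renault's proof'' of his Theorem~2.2 and observe that his intertwining unitary is assembled from an Effros decomposition of $r$. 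That may well be true, but it is an appeal to the internals of a proof you have not reproduced rather than an argument, and Renault's theorem is stated for general covariant representations, so nothing in its \emph{statement} licenses the identification. As written, the proposal locates where the work lies but does not do it.

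The paper closes the gap with a short algebraic argument that uses Renault's theorem only as a black box. For $a\in\cs(\Sigma)$ and $f\in\Gamma_{c}(G;r^{*}\css)$ one defines $a\cdot f(\gamma):=a\bigl(r(\gamma)\bigr)f(\gamma)$ and checks directly from the integral formulas that $\Lpp(a\cdot f)=r(a)\Lpp(f)$ and $\Lp(a\cdot f)=\tr(a)\Lp(f)$. If $M^{R}$ is the unitary intertwining $\Lpp$ and $\Lp$ supplied by Renault's Theorem~2.2, then
\begin{equation*}
  M^{R}r(a)\Lpp(f)=M^{R}\Lpp(a\cdot f)=\Lp(a\cdot f)M^{R}
  =\tr(a)\Lp(f)M^{R}=\tr(a)M^{R}\Lpp(f),
\end{equation*}
and nondegeneracy of $\Lpp$ yields $M^{R}r(a)=\tr(a)M^{R}$ for all $a$. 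Thus the \emph{same} unitary that intertwines the integrated forms already intertwines $r$ and $\tr$, with no need to reopen Renault's construction or to appeal to any uniqueness theorem. If you wish to salvage your approach, you would have to actually carry out the verification inside Renault's proof that his field $\set{\tr_{P}}$ arises from an Effros decomposition of $r$; the cleaner path is the module-action computation above.
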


\begin{remark}\label{rem-icd}
  As a consequence of Lemma~\ref{lem:r_and_tilde_r}, we note that
  $\tr$ is an ideal center decomposition of $r$ as defined in
  \cite{wil:crossed}*{Definition~G.18}.  This justifies the
  terminology used above.
\end{remark}

\begin{proof}
  [Proof of Lemma~\ref{lem:r_and_tilde_r}] If $a\in
  \cs(\Sigma)=\Gamma_{0}(\go;\css)$ and $f\in\Gamma_{c}(G;r^{*}\css)$,
  then we can define $a\cdot f\in \Gamma_{c}(G;r^{*}\css)$ by
  \begin{equation*}
    a\cdot f(\gamma):=a\bigl(r(\gamma)\bigr)f(\gamma).
  \end{equation*}
  Then, viewing $\tr_{P}$ as a representation of the fibre
  $C^{*}(\Sigma)\bigl(\sigma(P)\bigr)$, we have $\tr_{P}\bigl(a\cdot
  f(\gamma)\bigr) =\tr_{P}(a)\tr_{P}\bigl(f(\gamma)\bigr)$.  Thus,
  \begin{align*}
    \Lp(a\cdot f)h(P)&=\int_{G} \tr_{P}\bigl(a\cdot
    f(\gamma)\bigr)L(P,\gamma)\Delta(P,\gamma)^{-\frac12}
    h(P\cdot\gamma) \,d\lambda^{\sigma(P)}(\gamma)\\
    &= \tr_{P}(a) \int_{G}
    \tr_{P}\bigl(f(\gamma)\bigr)L(P,\gamma)\Delta(P,\gamma)^{-\frac12}
    h(P\cdot\gamma) \,d\lambda^{\sigma(P)}(\gamma) \\
    &= \tr_{P}(a)\Lp(f)h(P).
  \end{align*}
  That is, $\Lp(a\cdot f)=\tr(a)\Lp(f)$.  Similarly, $\Lpp(a\cdot
  f)=r(a)\Lpp(f)$.

  Now let $M^{R}:L^{2}(\go*\HH,\mu)\to L^{2}(\Prim\cs(\Sigma),\nu)$ be
  a unitary isomorphism intertwining $\Lpp$ and $\Lp$.  Then we
  compute that for all $a\in C_{c}(\Sigma)$ and $f\in
  \Gamma_{c}(G;r^{*}\css)$ we have
  \begin{align*}
    M^{R}r(a)\Lpp(f)h&= M^{R}\Lpp(a\cdot f)h \\
    &= \Lp(a\cdot f)M^{R}h \\
    &= \tr(a)\Lp(f)M^{R}h \\
    &= \tr(a)M^{R}\Lpp(f)h.
  \end{align*}
  Since $\Lpp$ is nondegenerate, $M^{R}r(a)=\tr(a)M^{R}$ for all $a\in
  C_{c}(\Sigma)$.  Thus $r$ and $\tr$ are equivalent as claimed.
\end{proof}



A subset $U\subset\primcss$ is $\G$-invariant if $U\cdot\G\subset U$.
If $\nu$ is a quasi-invariant measure on $\primcss$ and if $V$ is
$\nu$-conull, then $\G\restr V$ is conull with respect to
$\nu\circ\lambdabar$.  We say that $U\subset\primcss$ is
\emph{$\nu$-essentially invariant} if there is a $\nu$-conull set $V$
such that $U\cdot\G\restr V\subset U$.  Notice that if $U$ is
$\nu$-essentially invariant, then $\phi=\charfcn U$ is
\emph{invariant} in the sense that $\phi\circ s=\phi\circ r$ for
$\nu\circ\lambdabar$-almost all $(P,\gamma)$.  In general, a
quasi-invariant measure $\nu$ is called \emph{ergodic} for the action
of $G$ on $\primcss$ if every Borel function $\phi$ on $\primcss$
which is invariant in the above sense is constant $\nu$-almost
everywhere (see \cite{ram:am71}*{p.~274}).  It is not hard to see that
it suffices to consider $\phi$ which are characteristic functions of a
Borel set.  Furthermore, it follows from the above and
\cite{ram:am71}*{Lemma~5.1} or \cite{muh:cbms}*{Lemma~4.9} that
$\phi=\charfcn U$ is invariant if and only if $U$ is $\nu$-essentially
invariant.\footnote{We thank the referee for pointing out the proper
  relationship between ``$\nu$-essentially invariant'' sets and the
  usual notion of ergodicity for measured groupoids as laid out in
  \cite{ram:am71}.}

\begin{prop}
  \label{prop:nu_ergodic}
  Let $G$ be a second countable Hausdorff groupoid $G$ with Haar
  system $\set {\lambda^{u}}_{u\in G^{(0)}}$. Assume that $R$ is an
  irreducible representation of $C^{*}(G)$. Let $r$ be the restriction
  of $R$ to the isotropy groups of $G$ defined by \eqref{eq:5}. Then
  the quasi-invariant measure $\nu$ on $\Prim\cs(\Sigma)$ in the ideal
  center decomposition \eqref{eq:4} of $r$ is ergodic with respect to
  the action of $G$ on $\Prim \cs (\Sigma)$.
\end{prop}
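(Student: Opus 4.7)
The plan is to use the irreducibility of $R$ to force any $\nu$-essentially invariant Borel set $U \subset \primcss$ to have $\nu(U) \in \{0, \nu(\primcss)\}$. On the direct integral space $L^{2}(\primcss * \KK, \nu)$, let $P_{U}$ denote the projection of multiplication by $\charfcn U$ along the base. It clearly commutes with $\tr = \dint_{\primcss} \tr_{P}\, d\nu(P)$, since the latter acts fibrewise, and a direct computation with the integral formula for $\Lp$ yields
\begin{equation*}
\bigl(P_{U} \Lp(f) - \Lp(f) P_{U}\bigr) h(P) = \int_{G} \tr_{P}\bigl(f(\gamma)\bigr) L(P,\gamma) \Delta(P,\gamma)^{-\half} \bigl(\charfcn{U}(P) - \charfcn{U}(P \cdot \gamma)\bigr) h(P \cdot \gamma) \, d\lambda^{\sigma(P)}(\gamma),
\end{equation*}
which vanishes because $\charfcn{U}(P) = \charfcn{U}(P \cdot \gamma)$ for $\nu\circ\lambdabar$-almost every $(P,\gamma)$ by the essential invariance of $U$. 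Transferring through the unitary isomorphism $M^{R}$ (which intertwines $\Lpp$ and $\Lp$), the projection $E := (M^{R})^{*} P_{U} M^{R}$ on $L^{2}(\go * \HH, \mu)$ lies in $\Lpp(\cs(\Sigma) \rtimes_{\alpha} G)'$.

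The core step is to establish the inclusion $R(\cs(G))'' \subseteq \Lpp(\cs(\Sigma) \rtimes_{\alpha} G)''$, so that $E$ will also lie in $R(\cs(G))'$. For $f \in C_{c}(G)$ and $a \in \cs(\Sigma) = \Gamma_{0}(\go; \css)$, the ``elementary'' section $\gamma \mapsto f(\gamma) a(r(\gamma))$ belongs to $\Gamma_{c}(G; r^{*}\css)$, and unwinding the integral formula for $\Lpp$ gives $\Lpp(f \cdot a) = r(a) R(f)$ (the scalar $f(\gamma)$ pulls out of $r_u$, and $r_u(a(u))$ no longer depends on $\gamma$). Choosing an approximate unit $\{e_{i}\}$ in $\cs(\Sigma)$, nondegeneracy of $r$ yields $r(e_{i}) \to I$ strongly, so $\Lpp(f \cdot e_{i}) = r(e_{i}) R(f) \to R(f)$ strongly and $R(f) \in \Lpp(\cs(\Sigma) \rtimes_{\alpha} G)''$.

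Combining these, $E$ commutes with all of $R(\cs(G))$. Irreducibility of $R$ forces $E \in \{0, I\}$, and inverting $M^{R}$ then gives $P_{U} \in \{0, I\}$, i.e., $\nu(U) \in \{0, \nu(\primcss)\}$. I expect the main technical obstacle to be the inclusion $R(\cs(G))'' \subseteq \Lpp(\cs(\Sigma) \rtimes_{\alpha} G)''$: it relies simultaneously on the density of elementary sections in $\Gamma_{c}(G; r^{*}\css)$ (to be able to write $R(f)$ as a limit of elements of the form $r(a) R(f) = \Lpp(f \cdot a)$) and on the nondegeneracy of $r$ so that the approximate-unit passage to the strong limit actually recovers $R(f)$.
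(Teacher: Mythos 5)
Your argument is correct, and its skeleton is the same as the paper's: attach to a $\nu$-essentially invariant set the diagonal projection on $L^{2}(\primcss*\KK,\nu)$, check that it commutes with $\Lp$, transfer it through $M^{R}$, show the transferred projection commutes with $R(C_{c}(G))$, and invoke irreducibility of $R$. The one step where you genuinely diverge is the passage from ``commutes with $\Lpp$'' to ``commutes with $R$.'' The paper does this via Lemma~\ref{lem-marius}, which establishes $\Lpp(\phi\cdot f)=R(\phi)\Lpp(f)$ for a convolution-type left action of $\phi\in C_{c}(G)$ on $\Gamma_{c}(G;r^{*}\css)$; that identity needs the covariance relation of Lemma~\ref{lem-cov} and a Fubini argument, and the commutation then follows from nondegeneracy of $\Lpp$ applied to the dense set of vectors $\Lpp(f)h$. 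You instead use the identity $\Lpp(f\cdot a)=r(a)R(f)$ for elementary sections $\gamma\mapsto f(\gamma)a\bigl(r(\gamma)\bigr)$ --- which is immediate, since $f(\gamma)$ is a scalar and $a\bigl(r(\gamma)\bigr)=a(u)$ is constant on the fibre over which $\lambda^{u}$ is supported --- together with an approximate unit for $\cs(\Sigma)$ and nondegeneracy of $r$ to conclude $R(f)\in\Lpp\bigl(\cs(\Sigma)\rtimes_{\alpha}G\bigr)''$, whence $E$ commutes with $R(f)$. The two routes buy the same conclusion; yours trades the covariance/Fubini computation for a von Neumann closure argument (strong limits land in the double commutant) and is arguably the more economical of the two, while the paper's Lemma~\ref{lem-marius} has the side benefit of exhibiting $R$ explicitly as a multiplier-type action on the crossed product, a structure it reuses elsewhere.
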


For the proof, it will be convenient to make the following observation
(compare with the first part of the proof of
\cite{muhwil:nyjm08}*{Theorem~7.12}).

\begin{lemma}
  \label{lem-marius}
  If $f\in \Gamma_{c}(G;r^{*}\css)$ and $\phi\in C_{c}(G)$, then
  define $\phi\cdot f\in \Gamma_{c}(G;r^{*}\css)$ by
  \begin{equation*}
    \phi\cdot
    f(\gamma)=\int_{G}\phi(\eta)
    \alpha_{\eta}\bigl(f(\eta^{-1}\gamma)\bigr) 
    \, d\lambda^{r(\gamma)}(\eta).
  \end{equation*}
  Then $\Lpp(\phi\cdot f)=R(\phi)\Lpp(f)$.
\end{lemma}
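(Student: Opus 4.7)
My plan is to verify the identity by comparing matrix coefficients. It suffices to show that $\bip(R(\phi)\Lpp(f)h|k) = \bip(\Lpp(\phi\cdot f)h|k)$ for $h,k$ in a dense subspace of $L^{2}(\go*\HH,\mu)$. Using the disintegration formula for $R$ as the integrated form of $(\mu,\go*\HH,V)$, together with the explicit formula for $\Lpp(f)$ displayed just above Lemma~\ref{lem-cov}, the left-hand side unfolds as a triple integral in $(u,\eta,\gamma)$ with modular weight $\delta(\eta)^{-\half}\delta(\gamma)^{-\half}$ and integrand $\phi(\eta)\bip(V_\eta r_{s(\eta)}\bigl(f(\gamma)\bigr)V_\gamma h\bigl(s(\gamma)\bigr)|k(u))$.

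The first key move is to invoke the covariance identity $V_\eta r_{s(\eta)}(F)=r_{r(\eta)}\bigl(\alpha_\eta(F)\bigr)V_\eta$ from Lemma~\ref{lem-cov}, together with the multiplicativity $V_\eta V_\gamma=V_{\eta\gamma}$. Then I would apply Fubini and use left-invariance of $\lambda$ in the form $\int_G g(\eta\gamma)\,d\lambda^{s(\eta)}(\gamma)=\int_G g(\gamma')\,d\lambda^{r(\eta)}(\gamma')$ to perform the change of variable $\gamma'=\eta\gamma$ (so $\gamma=\eta^{-1}\gamma'$, with $r(\gamma')=u$). The Radon--Nikodym cocycle identity $\delta(\eta)\delta(\eta^{-1}\gamma')=\delta(\gamma')$ then collapses the modular factor $\delta(\eta)^{-\half}\delta(\eta^{-1}\gamma')^{-\half}$ to precisely $\delta(\gamma')^{-\half}$, with no residual $\delta(\eta)$ weight.

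After these manipulations the outer $\eta$-integration appears only inside the $r_u$-slot, in the form $\int_G \phi(\eta)\alpha_\eta\bigl(f(\eta^{-1}\gamma')\bigr)\,d\lambda^{u}(\eta)$. Pulling this integral inside $r_u$ is justified because $r_u$ is a bounded $*$-homomorphism; one can either approximate by elementary sections $\phi'\cdot a$ (which are inductive-limit dense in $\Gamma_c(G;r^{*}\css)$ as noted just before \eqref{eq:2}) or interpret the integral as a Bochner integral into the fibre $\cs(\Sigma)(u)$. The resulting integrand is precisely $(\phi\cdot f)(\gamma')$, so the whole expression collapses to $\bip(\Lpp(\phi\cdot f)h|k)$ and the lemma follows.

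The main obstacle I anticipate is the modular bookkeeping: confirming that the cocycle identity combines the two half-powers of $\delta$ without residue, and that the left-invariant change of variable $\gamma\mapsto\eta^{-1}\gamma'$ interacts cleanly with both the $V_{\eta\gamma}$ factor and the $r_{r(\eta)}$ subscript produced by Lemma~\ref{lem-cov}. A minor preliminary point is to check that $\phi\cdot f$ really defines an element of $\Gamma_c(G;r^{*}\css)$: the support condition follows from $\supp(\phi\cdot f)\subset\supp\phi\cdot\supp f$, and continuity follows from the continuity of $\alpha$ as a $C_0(G)$-linear isomorphism $r^{*}\cs(\Sigma)\to s^{*}\cs(\Sigma)$ established in the lemma preceding this one.
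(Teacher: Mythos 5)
Your proposal is correct and follows essentially the same route as the paper: the paper's proof runs the identical computation in the opposite direction, starting from $\Lpp(\phi\cdot f)h(u)$ and applying Fubini, the substitution $\gamma\mapsto\eta\gamma$ via left invariance of $\lambda$, the covariance relation of Lemma~\ref{lem-cov}, and the cocycle identity for $\delta$ to arrive at $R(\phi)\Lpp(f)h(u)$. Your matrix-coefficient framing and the explicit check that $\phi\cdot f\in\Gamma_{c}(G;r^{*}\css)$ are only cosmetic additions; the substance is the same.
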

\begin{proof}
  We simply compute using Fubini's Theorem as follows:
  \begin{align*}
    \Lpp(\phi&{}\cdot f)h(u) = \int_{G} r_{u}\bigl(\phi\cdot
    f(\gamma)\bigr) V_{\gamma}h\bigl(r(\gamma)\bigr)
    \delta(\gamma)^{-\frac12}\,d\lambda^{u}(\gamma)  \\
    &= \int_{G}\int_{G} \phi(\eta)
    r_{u}\bigl(\alpha_{\eta}\bigl(f(\eta^{-1}\gamma) \bigr)\bigr)
    V_{\gamma} h\bigl(s(\gamma)\bigr) \delta(\gamma)^{-\frac12}
    \,d\lambda^{u}(\gamma) \,d\lambda^{u}(\eta) \\
    \intertext{which, after sending $\gamma\mapsto \eta\gamma$, is} &=
    \int_{G}\int_{G}\phi(\eta)r_{u}\bigl(\alpha_{\eta}\bigl(f(\gamma)
    \bigr)\bigr) V_{\eta\gamma}h\bigl(s(\gamma)\bigr)
    \delta(\eta\gamma)^{-\frac12} \,d\lambda^{s(\eta)}(\gamma)
    \,d\lambda^{u}(\eta) \\
    \intertext{which, in view of Lemma~\ref{lem-cov}, is} &=
    \int_{G}\phi(\eta)V_{\eta} \Bigl(
    \int_{G}r_{u}\bigl(f(\gamma)\bigr) V_{\gamma}
    h\bigl(s(\gamma)\bigr) \delta(\gamma)^{-\frac12} \,
    d\lambda^{s(\eta)}(\gamma) \Bigr) \delta(\eta)^{-\frac12} \,
    d\lambda^{u}(\eta) \\
    &= \int_{G}\phi(\eta)V_{\eta}\Lpp(f)h\bigl(s(\eta)\bigr)
    \delta(\eta)^{-\frac12} \,
    d\lambda^{u}(\eta) \\
    &= R(\phi)\Lpp(f)h(u).\qedhere
  \end{align*}
\end{proof}

\begin{proof}
  [Proof of Proposition~\ref{prop:nu_ergodic}] Recall that the
  diagonal operators are the multiplication operators $T_{\phi}$ for
  $\phi$ a bounded Borel function on $\primcss$ (see
  \cite{wil:crossed}*{Definition~F.13}).  Let
  $B\subset\Prim\cs(\Sigma)$ be a $\G\restr V$-invariant Borel subset
  for a $\nu$-conull set $V\subset \primcss$, and let
  $\phi=\charfcn{B}$.  Then $\phi$ is a bounded Borel function on
  $\Prim\cs(\Sigma)$ and we can let $E=T_{\phi}$ be the corresponding
  diagonal operator on $L^{2}(\Prim\cs(\Sigma)*\KK,\nu)$.  It will
  suffice to show that $E$ is either the identity or the zero
  operator.  Since for $\nu$-almost all $P$, $\phi(P\cdot
  \gamma)=\phi(P)$ for $\lambda^{\sigma(P)}$-almost all $\gamma$, it
  is clear from the definition of $\Lp$ that $E$ commutes with
  $\Lp(f)$ for all $f\in \Gamma_{c}(G;r^{*}\css)$.  Thus
  $E'':=(M^{R})^{-1}EM^{R}$ commutes with $\Lpp(f)$ for all $f\in
  \Gamma_{c}(G;r^{*}\css)$.  But if $\phi\in C_{c}(G)$, then using
  Lemma~\ref{lem-marius}, we have
  \begin{align*}
    R(\phi)E''\Lpp(f)h&= R(\phi)\Lpp(f)E''h\\
    &= \Lpp(\phi\cdot f)E''h\\
    &= E''R(\phi)\Lpp(f)h
  \end{align*}
  for all $\phi\in C_{c}(G)$, $f\in\Gamma_{c}(G;r^{*}\css)$ and $h\in
  L^{2}(\go*\HH,\mu)$.  Since $\Lpp$ is nondegenerate, $E''$ commutes
  with every $R(\phi)$.  Since $R$ is assumed irreducible, $E''$, and
  therefore $E$, must be trivial.
\end{proof}

Let $\cb(\go)$ and $\bb(\go)$ denote, respectively, the bounded
continuous and bounded Borel functions on $\go$.  If
$\phi\in\bb(\go)$, then we will write $T_{\phi}$ and
$T^{\Sigma}_{\phi\circ\sigma}$ for the corresponding diagonal
operators in $L^{2}(\go*\HH,\mu)$ and
$L^{2}(\Prim\cs(\Sigma)*\KK,\nu)$, respectively.  Notice that if
$\set{\phi_{i}}\subset\bb(\go)$ is a bounded sequence converging to
$\phi\in\bb(\go)$ $\mu$-almost everywhere, then by the dominated
convergence theorem, $T_{\phi_{i}}\to T_{\phi}$ in the strong operator
topology.

\begin{lemma}
  \label{lem-intertwine}
  The isomorphism $M^{R}$ which implements the equivalence between $r$
  and $\tr$ intertwines the diagonal operators on $L^{2}(\go*\HH,\mu)$
  and $L^{2}(\Prim\cs(\Sigma)*\KK,\nu)$.  In fact, we have
  \begin{equation}
    \label{eq:6}
    M^{R}T_{\phi}=T^{\Sigma}_{\phi\circ\sigma}M^{R}\quad\text{for all
      $\phi\in\bb(\go)$.} 
  \end{equation}
\end{lemma}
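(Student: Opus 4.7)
My plan is to first establish \eqref{eq:6} for $\phi\in\cb(\go)$ by exploiting the $C_0(\go)$-algebra structure on $\cs(\Sigma)$ from Lemma~\ref{lem-cox-alg}, and then extend to all of $\bb(\go)$ via a standard monotone class argument powered by the strong-operator continuity of $\phi\mapsto T_\phi$ under bounded pointwise limits noted just before the lemma.

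For the first step, fix $\phi\in\cb(\go)$ and $a\in C_c(\Sigma)$, and let $\phi\cdot a\in C_c(\Sigma)$ be the product $(\phi\cdot a)(u,H,\gamma):=\phi(u)a(u,H,\gamma)$; this is precisely the multiplication by $\phi\circ p$ that implements the $C_0(\go)$-structure on $\cs(\Sigma)$ (see the proof of Lemma~\ref{lem-cox-alg}). Substituting into \eqref{eq:13} immediately gives $r(\phi\cdot a)=T_\phi\, r(a)$. On the side of $\tr$, each $\tr_P$ factors through the fiber $\cs(\Sigma)\bigl(\sigma(P)\bigr)$, and the $C_0(\go)$-action on this fiber is evaluation at $\sigma(P)$, so $\tr_P(\phi\cdot a)=\phi(\sigma(P))\tr_P(a)$ and hence $\tr(\phi\cdot a)=T^\Sigma_{\phi\circ\sigma}\tr(a)$. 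Combining these with the intertwining $M^R r(a)=\tr(a)M^R$ from Lemma~\ref{lem:r_and_tilde_r},
\[
M^R T_\phi r(a)=M^R r(\phi\cdot a)=\tr(\phi\cdot a)M^R=T^\Sigma_{\phi\circ\sigma} M^R r(a),
\]
and the nondegeneracy of $r$ yields \eqref{eq:6} for all $\phi\in\cb(\go)$.

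For the extension, let $\mathcal{A}\subset\bb(\go)$ denote the class of $\phi$ satisfying \eqref{eq:6}. Then $\mathcal{A}$ is a self-adjoint subspace containing $\cb(\go)$. Suppose $\phi_n\to\phi$ pointwise on $\go$ with $\sup_n\|\phi_n\|_\infty<\infty$; then $\phi_n\circ\sigma\to\phi\circ\sigma$ pointwise on $\primcss$, so the dominated convergence observation preceding the lemma gives $T_{\phi_n}\to T_\phi$ and $T^\Sigma_{\phi_n\circ\sigma}\to T^\Sigma_{\phi\circ\sigma}$ in the strong operator topology. Since multiplication by the unitary $M^R$ is strong-operator continuous on bounded sets, passing to the limit in $M^R T_{\phi_n}=T^\Sigma_{\phi_n\circ\sigma}M^R$ gives $\phi\in\mathcal{A}$. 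Thus $\mathcal{A}$ is closed under bounded pointwise convergence of sequences; since $\cb(\go)$ generates the Borel $\sigma$-algebra on the second countable locally compact Hausdorff space $\go$, the standard monotone class theorem yields $\mathcal{A}=\bb(\go)$.

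The main technical point requiring care is running the monotone class step with honest pointwise convergence rather than $\mu$-almost everywhere convergence, because a priori one has no handle on how $\mu$-null sets in $\go$ pull back under $\sigma$ to $\nu$-null sets in $\primcss$; the absolute continuity $\sigma_*\nu\ll\mu$ is in fact a consequence of \eqref{eq:6} once the lemma is established, and so cannot be assumed beforehand without circularity.
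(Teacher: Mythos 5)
Your proof is correct, and its first half is exactly the paper's: establish \eqref{eq:6} for $\phi\in\cb(\go)$ from $r(\phi\cdot a)=T_{\phi}r(a)$, $\tr(\phi\cdot a)=T^{\Sigma}_{\phi\circ\sigma}\tr(a)$, the intertwining of Lemma~\ref{lem:r_and_tilde_r}, and nondegeneracy of $r$. Where you diverge is the passage to $\bb(\go)$. You run a functional monotone class argument in which every approximating sequence converges pointwise \emph{everywhere}, so the dominated convergence observation applies on both $L^{2}(\go*\HH,\mu)$ and $L^{2}(\primcss*\KK,\nu)$ without any reference to null sets; as you note, this sidesteps the question of how $\mu$-null sets pull back under $\sigma$. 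The paper instead inserts an intermediate step: it shows $\sigma^{-1}(M)$ is $\nu$-null whenever $M$ is $\mu$-null, by writing $\charfcn M$ (for $M$ a $G_{\delta}$ inside a compact set) as a decreasing \emph{everywhere} limit of a bounded sequence in $C_{0}^{+}(\go)$ and applying the continuous case of \eqref{eq:6}; only then does it approximate a general $\phi\in\bb(\go)$ by a bounded sequence in $\cb(\go)$ converging $\mu$-almost everywhere and transfer that convergence through $\sigma$. So there is no circularity in the paper either --- it is a two-stage bootstrap, with the continuous case of \eqref{eq:6} doing the work --- and your caution about not assuming $\sigma_{*}\nu\ll\mu$ in advance is well placed but applies to neither argument. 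The trade-off: your route is marginally cleaner as a proof of the lemma itself, but the paper's detour is not wasted, since $\sigma_{*}\nu\ll\mu$ is invoked immediately after the lemma to disintegrate $\nu$ over $\mu$ as $\nu=\int_{\go}\nu_{u}\,d\mu(u)$. If you adopt the monotone class route, you should record separately that \eqref{eq:6} applied to $\phi=\charfcn M$ for $\mu$-null $M$ gives $T_{\charfcn M}=0$, hence $T^{\Sigma}_{\charfcn{\sigma^{-1}(M)}}=0$, which is precisely $\sigma_{*}\nu\ll\mu$.
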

\begin{proof}
  If $\phi\in C_{0}(\go)$ and $F\in C_{c}(\Sigma)$, then
  \cite{wil:crossed}*{Proposition~C.5} (and the discussion preceding
  it) implies that
  \begin{equation*}
    \tr_{P}(\phi\cdot F)=\phi\bigl(\sigma(P)\bigr)\tr_{P}(F),
  \end{equation*}
  and it is not hard to see that this formula still holds when
  $\phi\in\cb(\go)$.\footnote{The action of $C_{0}(\go)$ is given by a
    \emph{nondegenerate} homomorphism of $C_{0}(\go)$ into the center
    of $M\bigl(\cs(\Sigma)\bigr)$ and so extends to
    $\cb(\go)=M\bigl(C_{0}(\go)\bigr)$.} Thus if $\phi\in\cb(\go)$ and
  $F\in C_{c}(\Sigma)$, then
  \begin{align*}
    M^{R}T_{\phi}r(F)&= M^{R}r(\phi\cdot F) \\
    &= \tr(\phi\cdot F)M^{R}\\
    &= T^{\Sigma}_{\phi\circ\sigma}\tr(F)M^{R} \\
    &= T^{\Sigma}_{\phi\circ\sigma}M^{R}r(F).
  \end{align*}
  Since $r$ is nondegenerate, we have shown that \eqref{eq:6} holds
  for all $\phi\in\cb(\go)$.

  Now suppose that $M$ is a $\mu$-null set.  We claim that
  $\sigma^{-1}(M)$ is $\nu$-null.  Since $\mu$ is a Radon measure and
  $\go$ is second countable, we may as well assume that $M$ is a
  $G_{\delta}$ subset of a compact set.  But then there is a bounded
  sequence $\set{\phi_{i}}\subset C_{0}^{+}(\go)$ such that
  $\phi_{i}\searrow \charfcn M$ \emph{everywhere}.  Then
  $\phi_{i}\circ \sigma\searrow \phi\circ\sigma$ everywhere.  It
  follows that in the strong operator topology, we have
  $T_{\phi_{i}}\to T_{\charfcn M}=0$ and $T^{\Sigma}_{\phi_{i}\circ
    \sigma}\to T^{\Sigma}_{\charfcn{\sigma^{-1}(M)}}$.  Since
  \eqref{eq:6} holds for continuous functions, it follows that
  $T^{\Sigma}_{\charfcn{\sigma^{-1}(M)}}=0$.  That is,
  $\sigma^{-1}(M)$ is $\nu$-null.

  Now if $\phi\in\bb(\go)$, then we can find a bounded sequence
  $\set{\phi_{i}}\subset \cb(\go)$ such that $\phi_{i}\to \phi$
  $\mu$-almost everywhere.  In view of the previous paragraph,
  $\phi_{i}\circ \sigma\to \phi\circ\sigma$ $\nu$-almost everywhere.
  Therefore $T_{\phi_{i}}\to T_{\phi}$ and
  $T^{\Sigma}_{\phi_{i}\circ\sigma}\to T^{\Sigma}_{\phi\circ \sigma}$
  in the strong operator topology, and since \eqref{eq:6} holds for
  each $\phi_{i}$, it follows that \eqref{eq:6} holds for all $\phi$.
\end{proof}

As we saw in the previous proof, $\sigma_{*}\nu\ll\mu$, where
$\sigma_{*}\nu$ is the push-forward of $\nu$ under $\sigma$:
$\sigma_{*}\nu(E)=\nu(\sigma^{-1}(E))$. Therefore, using
\cite{wil:crossed}*{Corollary~I.9}, we can disintegrate $\nu$ with
respect to $\mu$. This means that there are finite measures $\nu_{u}$
on $\primcss $ supported in $\sigma^{-1}(u)$ such that
\begin{equation*}
  \int_{\primcss
  }\phi(P)d\nu(P)=\int_{\go }\int_{\primcss
  }\phi(P)d\nu_{u}(P)d\mu(u)
\end{equation*}
for any bounded Borel function $\phi$ on $\primcss$.  Since
$P\mapsto\tr_{P}$ is a Borel field of representations, we can form the
direct integral representation
\begin{equation} \hr_{u}:=\int_{\primcss
  }^{\oplus}\tr_{P}d\nu_{u}(P)\label{eq:19}
\end{equation} on
$\mathcal{V} _{u}:=L^{2}(\primcss *\KK ,\nu_{u})$. We can form then the
Borel Hilbert bundle $\go *\VV $ induced from $L^{2}(\primcss
*\KK ,\nu)$ via the disintegration of $\nu$ with respect to $\mu$ (see
\cite{wil:crossed}*{Example~F.19}).  Since we can identify
$L^{2}(\primcss *\KK ,\nu)$ with $L^{2}(\go *\VV ,\mu)$, it
follows that $\tr$ is equivalent to
\begin{equation*}
  \hr=\int_{\go
  }^{\oplus}\hr_{u}d\mu(u).
\end{equation*}
If $\phi\in B^{b}(\go )$ and if $T_{\phi}^{\prime}$ is the
corresponding diagonal operator on $L^{2}(\go *\VV ,\mu)$, then
\begin{equation*}
  M^{R}T_{\phi}=T_{\phi}^{\prime}M^{R}.
\end{equation*}
Therefore \cite{wil:crossed}*{Corollary~F.34} implies that $r_{u}$ and
$\hr_{u}$ are equivalent for $\mu$-almost all~$u$.

Since $\ker\hr_{u}$ is separable, equation \eqref{eq:19} implies that
there exists a $\nu_{u}$-null set $N(u)$ such that
\begin{equation*}
  \ker\hr_{u}\subset\ker\tr_{P} \quad\text{if $P\notin N(u)$.}
\end{equation*}
Since $\supp \nu_{u}\subset\sigma^{-1}(u)$ we can rewrite this as
\begin{equation}
  \ker\hr_{\sigma(P)}\subset\ker\tr_{P}\label{eq:20}
\end{equation}
for $\nu_{u}$-almost all $P$ and for all $u$. It follows that
\eqref{eq:20} holds for $\nu$-almost all $P$. Thus off a $\nu$-null
set $N$, $\tr_{P}$ factors through
$\cs\bigl(G\bigl(\sigma(P)\bigr)\bigr)$.

\subsection{The Induced Representation}
\label{sec:induc-repr}

We are retaining the notation and assumptions from the previous
section: $R$ is an irreducible representation of $\cs(G)$ with kernel
$K$ and $r$ is the restriction of $R$ to the isotropy groups of $G$.
We have seen that if
\begin{equation*}
  \tr=\dint_{\primcss}\tr_{P}\,d\nu(P)
\end{equation*}
is the ideal center decomposition of $r$ defined in (\ref{eq:4}), then
$\tr_{P}$ factors through $\cs\bigl(G\bigl(\sigma(P)\bigr)\bigr)$ for
almost all $P$.  Next we want to form an induced representation
$\operatorname{ind}\tr$ of $\cs(G)$.  First we recall some of the
basics of induced representations of groupoids.

Suppose that $\rho$ is a representation of $\cs\bigl(G(u)\bigr)$.
Then, using the notation from \cite{ionwil:pams08}*{\S2}, we define
$\indgug\rho$ to be representation of $\cs(G)$ on the completion of
the $C_{c}(G_{u})\odot \H_{\rho}$ with respect to the inner product
defined on elementary tensors by
\begin{equation*}
  \ip(\phi\tensor h|\psi\tensor
  k)=\bip(\rho\bigl(\Rip<\psi,\phi>\bigr)h|k) .
\end{equation*}
Then
\begin{equation*}
  \indgug\rho(f)(\phi\tensor h)=f*\phi\tensor h.
\end{equation*}
If $\mathcal{I}(A)$ denotes the space of (closed two-sided) ideals in
$A$, then, as in \cite{rw:morita}*{Proposition~3.34 and
  Corollary~3.35}, there is a continuous map
\begin{equation*}
  \indgug:\mathcal{I}\bigl(\cs\bigl(G(u)\bigr)\bigr) \to
  \mathcal{I}\bigl(\cs(G)\bigr) 
\end{equation*}
characterized by
\begin{equation*}
  \ker\bigl(\indgug \rho\bigr)=\indgug(\ker\rho).
\end{equation*}
Since $\indgug\rho$ is irreducible if $\rho$ is
\cite{ionwil:pams08}*{Theorem~5}, it follows that $\indgug
J\in\Prim\cs(G)$ if $J\in\Prim\cs\bigl(G(u)\bigr)$.  Recall that we
call $K$ an \emph{induced primitive ideal} if $K$ is primitive and
$K=\indgug J$ for some $J\in \Prim\cs\bigl(G(u)\bigr)$.

The following is a straightforward consequence of the definitions.

\begin{lemma}
  \label{lem-ind-inv}
  Let $\rho$ be a representation of $\cs\bigl(G(u)\bigr)$ and let
  $\gamma\in G_{u}$.  Let $\gamma\cdot \rho$ be the representation of
  $\cs\bigl(G\bigl(r(\gamma)\bigr)\bigr)$ given by $\gamma\cdot
  \rho(a):= \rho(\gamma^{-1} a\gamma)$. Then $\indgug\rho$ and
  $\Ind_{G(r(\gamma))}^{G} \gamma\cdot \rho$ are equivalent
  representations.
\end{lemma}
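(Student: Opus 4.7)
The plan is to construct an explicit unitary intertwiner using right translation by $\gamma$. Set $v := r(\gamma)$; the map $\eta \mapsto \eta\gamma$ is a bijection from $G_v = \set{\eta \in G : s(\eta) = v}$ onto $G_u$, and dually we get a linear map of spaces of compactly supported functions. Define
\begin{equation*}
  U\colon C_{c}(G_{u}) \odot \H_{\rho} \to C_{c}(G_{v}) \odot \H_{\gamma\cdot\rho}, \qquad U(\phi \otimes h) := \phi_{\gamma} \otimes h,
\end{equation*}
where $\phi_{\gamma}(\eta) := \phi(\eta\gamma)$ for $\eta \in G_{v}$ and where the Hilbert spaces of $\rho$ and $\gamma \cdot \rho$ are canonically identified. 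My plan is to show that $U$ preserves pre-inner products and intertwines the two left actions of $\cs(G)$, then extend to the Hilbert space completions.

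For the inner product computation, unwinding the right inner product $\Rip<\psi_{\gamma},\phi_{\gamma}>$ for $\xi \in G(v)$ and performing the change of variable $\eta \mapsto \eta\gamma^{-1}$ in the fiber integral (with a measure-theoretic bookkeeping factor captured by the cocycle $\omega$ of Lemma~\ref{lem-omega}) should yield
\begin{equation*}
  \Rip<\psi_{\gamma},\phi_{\gamma}>(\xi) = \alpha_{\gamma^{-1}}\bigl(\Rip<\psi,\phi>\bigr)(\xi),
\end{equation*}
where $\alpha_{\gamma^{-1}}$ is given by \eqref{eq:11}. Together with the interpretation $\gamma \cdot \rho(a) = \rho(\gamma^{-1}a\gamma) = \rho\bigl(\alpha_{\gamma^{-1}}(a)\bigr)$ of the conjugation notation in the statement, this produces
\begin{equation*}
  \bip(\gamma\cdot\rho(\Rip<\psi_{\gamma},\phi_{\gamma}>)h|k) = \bip(\rho(\Rip<\psi,\phi>)h|k),
\end{equation*}
so $U$ is isometric on the pre-Hilbert space.

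The intertwining property is then essentially formal: the left convolution action of $f \in C_{c}(G)$ on $C_{c}(G_{u})$ is $(f * \phi)(\eta) = \int f(\zeta)\phi(\zeta^{-1}\eta)\,d\lambda^{r(\eta)}(\zeta)$, and since this only uses left multiplication in the $\eta$ variable, associativity $(\zeta^{-1}\eta)\gamma = \zeta^{-1}(\eta\gamma)$ immediately gives $(f * \phi)_{\gamma} = f * \phi_{\gamma}$ in $C_{c}(G_{v})$, so $U \circ \indgug\rho(f) = \Ind_{G(v)}^{G}\gamma\cdot\rho(f) \circ U$ on elementary tensors. Extending $U$ to the Hilbert space completions then delivers the claimed unitary equivalence. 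The main obstacle will be the careful bookkeeping in the inner product calculation, particularly tracking the appearance of the cocycle $\omega$ when expressing right translation in the language of the conjugation action $\alpha$; but this is essentially the content of Lemma~\ref{lem-omega}, so the computation is routine once set up.
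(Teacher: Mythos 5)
Your construction is essentially the paper's: the paper's proof defines $V(f)(h)=\omega\bigl(\gamma,G(u)\bigr)^{1/2}f(h\gamma)$, verifies $\rho\bigl(\Rip<\psi,\phi>\bigr)=\gamma\cdot\rho\bigl(\Rip<V(\psi),V(\phi)>\bigr)$ by exactly the change of variables you describe, and gets the intertwining formally, so the approach matches. The only slips are that your $U$ omits the normalizing constant $\omega\bigl(\gamma,G(u)\bigr)^{1/2}$ (and the map carrying $\Rip<\psi,\phi>$ to a function on $G(v)$ should be $\alpha_{\gamma}$, not $\alpha_{\gamma^{-1}}$), so the pre-inner products agree only up to the constant factor $\omega\bigl(\gamma,G(u)\bigr)$ and $U$ is a positive scalar multiple of a unitary rather than a unitary --- harmless for the claimed equivalence, but this constant is precisely the ``bookkeeping'' your sketch defers.
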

\begin{proof}
  [Sketch of the Proof] Define $V:C_{c}(G_{u})\to
  C_{c}(G_{r(\gamma)})$ by
  \begin{equation*}
    V(f)(h)=\omega\bigl(\gamma,G(u)\bigr)^{\frac12} f(h\gamma).
  \end{equation*}
  Then
  \begin{align*}
    \rho\bigl(\Rip<\psi,\phi>\bigr)&= \int_{G(u)} \Rip<\psi,\phi>(h)
    \rho(h) \,d\beta^{G(u)}(h) \\
    &= \int_{G(u)}\int_{G} \overline{\psi(\eta)}\phi(\eta h) \rho(h)
    \,
    d\lambda_{u}(\eta) \,d\beta^{G(u)}(h) \\
    &= \int_{G(u)} \int_{G} \overline{\psi(\eta\gamma)}
    \phi(\eta\gamma h) \rho(h)
    \, \lambda_{r(\gamma)} (\eta) \,d\beta^{G(u)} (h) \\
    \intertext{which, in view of Lemma~\ref{lem-omega}, is} &=
    \omega\bigl(\gamma,G(u)\bigr)\int_{G(r(\gamma))} \int_{G}
    \overline{ \psi(\eta\gamma)} \phi(\eta h\gamma)\gamma\cdot\rho(h)
    \,
    d\lambda_{r(\gamma)}(\eta)\, d\beta^{G(r(\gamma))}(h) \\
    &= \gamma\cdot\rho\bigl(\Rip<V(\psi),V(\phi)>\bigr).
  \end{align*}
  Since $V$ is clearly onto, $f\tensor h\mapsto V(f)\tensor h$ extends
  to a unitary intertwining the two representations.
\end{proof}

Let $\tk(P)$ be the space of the induced representation
$\ind_{G(\sigma(P))}^{G}\tr_{P}$.  Thus, $\tk(P)$ is the completion of
of $C_{c}(G_{\sigma(P)})\odot \mathcal{K}(P)$ as described above.  Let
$\primcss*\tilde\KK=\set{(P,\tk(P):P\in\primcss}$ be the disjoint
union of the $\tk(P)$.  Since
\begin{equation*}
  P\mapsto \bip((\phi\tensor h)(P)|{\psi\tensor k)(P)})
\end{equation*}
is Borel, \cite{wil:crossed}*{Proposition~F.8} implies there is a
unique Borel structure on $\primcss*\tilde\KK$ making it into an
analytic Borel Hilbert bundle such that each $\phi\tensor h$ defines a
Borel section.  Since
\begin{equation*}
  \Ind_{G(\sigma(P))}^{G}\tr_{P}(\psi)\bigl((\phi\tensor h)(P)\bigr) =
  \psi*\phi\tensor h(P),
\end{equation*}
$P\mapsto \Ind_{G(\sigma(P))}^{G}\tr_{P}$ is a Borel field of
representations of $\cs(G)$.  Therefore, we can define the direct
integral representation
\begin{equation}
  \label{eq:7}
  \operatorname{ind}\tr
  :=\dint_{\primcss}\Ind_{G(\sigma(P)}^{G}\tr_{P}\,d\nu(P). 
\end{equation}

\begin{lemma}
  \label{lem-g-inv}
  Let $U\subset\primcss$ be the $\nu$-conull set associated to the
  equivariant ideal center decomposition $\tr$ of $r$.  Let
  $I_{P}:=\ker\bigl(\Ind_{G(\sigma(P)}^{G}\tr_{P}\bigr)$.  Then for
  all $P\in U$, $I_{P}\in\Prim\cs(G)$.  Furthermore, $P\mapsto I_{P}$
  is a Borel map of $U\subset \primcss$ into $\Prim\cs(G)$ such that
  $I_{P\cdot \gamma}=I_{P}$ for all $(P,\gamma)\in\G\restr U$.
\end{lemma}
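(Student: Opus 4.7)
The plan is to verify the three assertions---primitivity of $I_P$, Borel measurability, and $\G$-invariance---in that order, relying on the machinery already assembled. For \emph{primitivity}, the end of Section~\ref{sec:restr-stab-groups} established that off a $\nu$-null set $N$, $\tr_P$ factors through the quotient $\kappa_{\sigma(P)}:\cs(\Sigma)\to \cs\bigl(G(\sigma(P))\bigr)$, so after replacing $U$ by $U\setminus N$ (still $\nu$-conull) we may regard $\tr_P$ as a representation of $\cs\bigl(G(\sigma(P))\bigr)$. Since $\tr_P$ is homogeneous on $\cs(\Sigma)$ by \cite{ren:jot91}*{Theorem~2.2} and the commutant is preserved under the quotient, it is homogeneous on $\cs\bigl(G(\sigma(P))\bigr)$ as well; hence by \cite{wil:crossed}*{Corollary~G.9}, its kernel $J_P$ is in $\Prim \cs\bigl(G(\sigma(P))\bigr)$. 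The Rieffel characterization $\ker\bigl(\indtog{G(\sigma(P))}\tr_P\bigr)=\indtog{G(\sigma(P))}(\ker \tr_P)$ recalled in the text then yields $I_P=\indtog{G(\sigma(P))}J_P$, which is primitive by the discussion preceding the lemma.

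For \emph{Borel measurability}, the paragraph just before \eqref{eq:7} constructed the analytic Borel Hilbert bundle $\primcss*\tilde\KK$ and checked that $P\mapsto \Ind_{G(\sigma(P))}^G\tr_P$ is a Borel field of representations of $\cs(G)$. Consequently $P\mapsto\|\Ind_{G(\sigma(P))}^G\tr_P(f)\|$ is Borel for every $f$ in a countable dense subset of $\cs(G)$, and the standard Mackey criterion for the Borel structure on $\Prim\cs(G)$ then forces $P\mapsto I_P$ to be a Borel map into $\Prim\cs(G)$.

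For \emph{$\G$-invariance}, fix $(P,\gamma)\in\G\restr U$. The covariance identity \eqref{eq:3} gives $\tr_{P\cdot\gamma}\cong \tr_P\circ\alpha_\gamma$ as representations of $\cs\bigl(G(s(\gamma))\bigr)$. Apply Lemma~\ref{lem-ind-inv} with $u=s(\gamma)$, $\rho=\tr_P\circ\alpha_\gamma$, and inducing element $\gamma\in G_u$: it yields
\begin{equation*}
  \Ind_{G(s(\gamma))}^G(\tr_P\circ\alpha_\gamma)\cong \Ind_{G(r(\gamma))}^G\bigl(\gamma\cdot(\tr_P\circ\alpha_\gamma)\bigr).
\end{equation*}
Unwinding the definitions, the composite action on the right sends $a\in\cs\bigl(G(r(\gamma))\bigr)$ to $\tr_P\bigl(\alpha_\gamma(\gamma^{-1}a\gamma)\bigr)$; since $\alpha_\gamma$ implements conjugation by $\gamma$ on fibres (modulo the cocycle $\omega$, which cancels in the two successive conjugations), this collapses to $\tr_P(a)$. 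Taking kernels then gives $I_{P\cdot \gamma}=I_P$. The main obstacle is this last cancellation: verifying that the Lemma's ``$\gamma\cdot$'' and the equivariance's $\alpha_\gamma$ really are inverse fibre-level operations, so that the double conjugation collapses to the identity on $\cs\bigl(G(r(\gamma))\bigr)$; everything else is a direct invocation of previous results.
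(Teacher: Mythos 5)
Your proposal is correct and follows essentially the same route as the paper: primitivity of $I_P$ via the factorization of $\tr_P$ through $\cs\bigl(G(\sigma(P))\bigr)$ and the remarks on inducing primitive ideals, Borel measurability from the Borel field $P\mapsto \Ind_{G(\sigma(P))}^{G}\tr_{P}$, and $\G$-invariance by combining the equivariance \eqref{eq:3} (equivalently \eqref{eq:1}) with Lemma~\ref{lem-ind-inv}. You are in fact somewhat more explicit than the paper about two points it glosses over --- that homogeneity passes to the quotient $\cs\bigl(G(\sigma(P))\bigr)$ and that the cocycle $\omega$ cancels so that $\gamma\cdot(\tr_P\circ\alpha_\gamma)=\tr_P$, which is exactly the content of \eqref{eq:9} --- but the argument is the same.
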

\begin{proof}
  Since $\tr_{P}$ has kernel $P$, $I_{P}$ is primitive by the remarks
  preceding Lemma~\ref{lem-ind-inv}, and $P\mapsto I_{P}$ is Borel by
  \cite{wil:crossed}*{Lemma~F.28}.  Recall that by \eqref{eq:1}, there
  is a $\nu$-conull set $U$ such that $\tr_{P\cdot\gamma}$ is
  equivalent to $\gamma\cdot\tr_{P}$ if $(P,\gamma)\in\G\restr U$.
  Then for $(P,\gamma)\in\G\restr U$,
  \begin{align*}
    I_{P\cdot\gamma}&= \ker
    \bigl(\Ind_{G(\sigma(P\cdot\gamma))}^{G}\tr_{P\cdot \gamma}\bigr)
    \\
    &=\ker \bigl(\Ind_{G(s(\gamma))}^{G}\gamma^{-1}\cdot \tr_{P}\bigr) \\
    \intertext{which, by Lemma~\ref{lem-ind-inv}, is}
    &= \ker \bigl(\Ind_{G(\sigma(P))}^{G}\tr_{P}\bigr) \\
    &= I_{P}.\qedhere
  \end{align*}
\end{proof}

\begin{prop}
  \label{prop-prim-ind}
  Let $\ind\tr$ be the induced representation associated to an
  irreducible representation $R$ of $\cs(G)$ defined by \eqref{eq:7}.
  Then the kernel of $\ind\tr$ is an induced primitive ideal.
\end{prop}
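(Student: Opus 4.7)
The plan is to exploit the ergodicity of $\nu$ established in Proposition~\ref{prop:nu_ergodic}, in tandem with the $G$-invariance $I_{P\cdot\gamma}=I_P$ from Lemma~\ref{lem-g-inv}, to show that $P\mapsto I_P$ is $\nu$-essentially constant on $\primcss$. Once that is in hand, a direct-integral unwinding of $\ind\tr$ identifies its kernel with this common value, and Lemma~\ref{lem-g-inv} supplies that this value is an induced primitive ideal.

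First I would observe that the second countability of $G$ makes $\cs(G)$ separable, so $\Prim\cs(G)$ is a $T_{0}$ space with a countable open basis $\set{U_{n}}_{n\ge1}$ for its Jacobson topology; in particular its Borel structure is countably separated. For each $n$, the preimage $B_{n}:=\set{P\in U:I_{P}\in U_{n}}$ is Borel by Lemma~\ref{lem-g-inv}, and the invariance $I_{P\cdot\gamma}=I_{P}$ on $\G\restr U$ shows that $\charfcn{B_{n}}$ is invariant in the sense defined before Proposition~\ref{prop:nu_ergodic}. Ergodicity of $\nu$ therefore forces each $B_{n}$ to be either $\nu$-null or $\nu$-conull. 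Intersecting the conull sets obtained this way over all $n$ yields a $\nu$-conull set $U_{0}\subseteq U$ on which $P\mapsto I_{P}$ lies in precisely the same basic open neighborhoods for every $P$. Because $\Prim\cs(G)$ is $T_{0}$, this pins down a single $K_{0}\in\Prim\cs(G)$ with $I_{P}=K_{0}$ for all $P\in U_{0}$.

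Next, I would unwind the direct-integral formula \eqref{eq:7}: standard direct-integral theory gives $f\in\ker(\ind\tr)$ if and only if $\Ind_{G(\sigma(P))}^{G}\tr_{P}(f)=0$ for $\nu$-almost every $P$, which in turn is equivalent to $f\in I_{P}$ for $\nu$-almost every $P$. Combined with the essential constancy from the previous paragraph, this yields $\ker(\ind\tr)=K_{0}$. Since $K_{0}=I_{P}$ for any $P\in U_{0}$, Lemma~\ref{lem-g-inv} guarantees that $K_{0}$ is an induced primitive ideal, which is the desired conclusion.

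The main obstacle I anticipate is the ergodicity step: one must check carefully that the Borel structure on $\Prim\cs(G)$ is countably separated and that ``$B_{n}$ is null or conull for every $n$'' really does pin down a unique essential value of $I_{P}$, rather than merely an essentially invariant equivalence class. The kernel identification in the second step is essentially formal once one has the direct-integral formula for $\ind\tr$ and the standard measurable-field description of the kernel of a direct integral; no further groupoid-specific input is needed there.
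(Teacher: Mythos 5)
Your proposal is correct and follows essentially the same route as the paper: invariance of $P\mapsto I_{P}$ (Lemma~\ref{lem-g-inv}) plus ergodicity of $\nu$ (Proposition~\ref{prop:nu_ergodic}) forces $P\mapsto I_{P}$ to be $\nu$-essentially constant, and the kernel of the direct integral \eqref{eq:7} is then that common induced primitive ideal. The only difference is that your countable-basis/$T_{0}$ argument is an inline proof of the essential-constancy fact that the paper simply cites (the proof of \cite{wil:crossed}*{Lemma~D.47}), and it is carried out correctly.
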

\begin{proof}
  Let $\kappa:\primcss\to\Prim\cs(G)$ be a Borel map such that
  $\kappa(P):=I_{P}$ for $P\in U$.  If $B\subset \Prim\cs(G)$ is
  Borel, then $\kappa^{-1}(B)$ is $\nu$-essentially invariant.  Since
  $\nu$ is ergodic by Proposition~\ref{prop:nu_ergodic}, the proof of
  \cite{wil:crossed}*{Lemma~D.47} implies that $\kappa$ is essentially
  constant; that is, there is a $P_{0}$ such that
  $\ker\bigl(\Ind_{G(\sigma(P))}^{G}\tr_{P}\bigr)=I_{P_{0}}$ for
  $\nu$-almost all $P$.  But then $\ker(\ind\tr)=I_{P_{0}}$.  This is
  what we wanted.
\end{proof}

Let $r_{u}$ be as in (\ref{eq:5}) and let $\tH(u)$ be the space of the
induced representation $\indgug r_{u}$.  Thus $\tH(u)$ is the
completion of $C_{c}(G_{u})\atensor \H(u)$ with respect to the inner
product
\begin{equation*}
  \ip(\phi\tensor h|\psi\tensor
  k)=\bip(r_{u}\bigl(\Rip<\psi,\phi>\bigr)h|k) .
\end{equation*}
(There is no harm in taking $\phi$ and $\psi$ in $C_{c}(G)$ above.)
Let $\go*\tHH=\set{\bigl(u,\tH(u)\bigr):u\in\go}$ be the disjoint
union.  Then for each $\phi\tensor h\in C_{c}(G)\tensor
L^{2}(\go*\HH,\mu)$ we get a section of $\go*\tHH$ by
\begin{equation*}
  (f\tensor h)(u)=f\tensor h(u).
\end{equation*}
Then
\begin{align}
  \bip(\phi\tensor h(u)|{\psi\tensor k(u)}) &=
  \bip(r_{u}\bigl(\Rip<\psi,\phi>\bigr)h|k) \nonumber \\
  &= \int_{G(u)}\bip(\psi^{*}*\phi(\eta)V_{\eta}h(u)|{k(u)})
  \Delta_{G(u)}(\eta)^{-\half} \, d\beta^{G(u)}(\eta),\label{eq:15}
\end{align}
which is Borel in $u$.  Thus by
\cite{wil:crossed}*{Proposition~F.8}. there is a unique Borel
structure on $\go*\tHH$ making it into an analytic Borel Hilbert
bundle such that each $f\tensor h$ is a Borel section.  Since
\begin{equation*}
  \indgug r_{u}(\psi)\bigl(\phi\tensor h(u)\bigr) = \psi*\phi\tensor h,
\end{equation*}
it follows that $u\mapsto \indgug r_{u}$ is a Borel field of
representations and that we can make sense out of the direct integral
representation
\begin{equation*}
  \operatorname{ind}r:=\dint_{\go}\indgug r_{u}\,d\mu(u)
\end{equation*}
on $L^{2}(\go*\tHH,\mu)$.

\begin{proof}
  [Proof of Theorem~\ref{thm:Main}] Let $\Rind$ be the induced
  representation of $\cs(G)$ constructed by Renault on pages 16--17 of
  \cite{ren:jot91}.  After a bit of untangling and after specializing
  \cite{ren:jot91}*{Lemma~2.3} to our case, we see that there is a
  unitary $U$ mapping the space of $\Rind$ onto the completion of
  $C_{c}(G)\atensor L^{2}(\go*\HH,\mu)$ with respect to the inner
  product
  \begin{equation*}
    \bip(\phi\tensor h|\psi\tensor k)=\int_{\go}\bip(\phi\tensor h(u)|
    \psi\tensor k{(u)})\,d\mu(u),
  \end{equation*}
  where the integrand on the right-hand side is given by
  \eqref{eq:15}.  Moreover,
  \begin{equation*}
    (U^{*}\Rind(\psi)U)(\phi\tensor h)=\psi*\phi\tensor h.
  \end{equation*}
  Simply said: $\Rind$ is equivalent to $\operatorname{ind}r$.

  Let $M^{R}:L^{2}(\go*\HH,\mu)\to L^{2}(\primcss*\KK,\nu)$ be the
  unitary implementing the equivalence between $r$ and $\tr$, and then
  define $W:C_{c}(G)\atensor L^{2}(\go*\HH,\mu)\to C_{c}(G)\atensor
  L^{2}\bigl(\primcss*\KK,\nu)\bigr)$ by $W(\phi\tensor
  h):=\phi\tensor M^{R}h$. Since
  \begin{align*}
    W\circ \operatorname{ind}r(\psi)(\phi\tensor h) &=
    W(\psi*\phi\tensor M^{R}h) \\
    &= \operatorname{ind}\tr(\psi)(f\tensor M^{R}h) \\
    &= \operatorname{ind}\tr(\psi)\circ W(\phi\tensor h),
  \end{align*}
  it is not hard to see that $W$ extends to a unitary intertwining
  $\operatorname{ind}r$ and $\operatorname{ind}\tr$.  Therefore,
  $\operatorname {ind}\tr$ and $\Rind$ have the same kernel.  But then
  \cite{ren:jot91}*{Theorem~3.3} implies that $\ker R\subset
  \ker\bigl(\operatorname{ind}\tr\bigr)$.  On the other hand, if $G$
  is amenable as in the statement of the theorem, then
  \cite{ren:jot91}*{Theorem~3.6} implies that $K=\ker R=\ker
  \bigl(\operatorname{ind}\tr\bigr)$.  However, $\ker
  \bigl(\operatorname{ind}\tr\bigr)$ is an induced primitive ideal by
  Proposition~\ref{prop-prim-ind}.  This completes the proof.
\end{proof}

\def\noopsort#1{}\def\cprime{$'$} \def\sp{^}
\begin{bibdiv}
\begin{biblist}

\bib{anaren:amenable00}{book}{
      author={Anantharaman-Delaroche, Claire},
      author={Renault, Jean},
       title={Amenable groupoids},
      series={Monographies de L'Enseignement Math\'ematique [Monographs of
  L'Enseignement Math\'ematique]},
   publisher={L'Enseignement Math\'ematique},
     address={Geneva},
        date={2000},
      volume={36},
        ISBN={2-940264-01-5},
        note={With a foreword by Georges Skandalis and Appendix B by E.
  Germain},
      review={\MR{MR1799683 (2001m:22005)}},
}

\bib{echwil:tams08}{article}{
      author={Echterhoff, Siegfried},
      author={Williams, Dana~P.},
       title={Inducing primitive ideals},
        date={2008},
     journal={Trans. Amer. Math. Soc},
      volume={360},
       pages={6113\ndash 6129},
}

\bib{eff:tams63}{article}{
      author={Effros, Edward~G.},
       title={A decomposition theory for representations of {$C^*$}-algebras},
        date={1963},
     journal={Trans. Amer. Math. Soc.},
      volume={107},
       pages={83\ndash 106},
      review={\MR{26 \#4202}},
}

\bib{effhah:mams67}{book}{
      author={Effros, Edward~G.},
      author={Hahn, Frank},
       title={Locally compact transformation groups and {$C^*$}-algebras},
      series={Memoirs of the American Mathematical Society, No. 75},
   publisher={American Mathematical Society},
     address={Providence, R.I.},
        date={1967},
      review={\MR{37 \#2895}},
}

\bib{fel:tams64}{article}{
      author={Fell, James M.~G.},
       title={Weak containment and induced representations of groups, {II}},
        date={1964},
     journal={Trans. Amer. Math. Soc.},
      volume={110},
       pages={424\ndash 447},
}

\bib{gooros:im79}{article}{
      author={Gootman, Elliot~C.},
      author={Rosenberg, Jonathan},
       title={The structure of crossed product {$C^*$}-algebras: a proof of the
  generalized {E}ffros-{H}ahn conjecture},
        date={1979},
        ISSN={0020-9910},
     journal={Invent. Math.},
      volume={52},
      number={3},
       pages={283\ndash 298},
      review={\MR{80h:46091}},
}

\bib{ionwil:pams08}{article}{
      author={Ionescu, Marius},
      author={Williams, Dana~P.},
       title={Irreducible representations of groupoid {\cs}-algebras},
        date={in press},
     journal={Proc. Amer. Math. Soc.},
        note={(Pre-print available from arXiv:0712.0596)},
}

\bib{muh:cbms}{techreport}{
      author={Muhly, Paul~S.},
       title={Coordinates in operator algebra},
 institution={CMBS Conference Lecture Notes (Texas Christian University 1990)},
        date={1999},
        note={In continuous preparation},
}

\bib{mrw:jot87}{article}{
      author={Muhly, Paul~S.},
      author={Renault, Jean~N.},
      author={Williams, Dana~P.},
       title={Equivalence and isomorphism for groupoid {$C^*$}-algebras},
        date={1987},
        ISSN={0379-4024},
     journal={J. Operator Theory},
      volume={17},
      number={1},
       pages={3\ndash 22},
      review={\MR{88h:46123}},
}

\bib{muhwil:nyjm08}{book}{
      author={Muhly, Paul~S.},
      author={Williams, Dana~P.},
       title={Renault's equivalence theorem for groupoid crossed products},
      series={NYJM Monographs},
   publisher={State University of New York University at Albany},
     address={Albany, NY},
        date={2008},
      volume={3},
        note={Available at http://nyjm.albany.edu:8000/m/2008/3.htm},
}

\bib{rw:morita}{book}{
      author={Raeburn, Iain},
      author={Williams, Dana~P.},
       title={Morita equivalence and continuous-trace {$C^*$}-algebras},
      series={Mathematical Surveys and Monographs},
   publisher={American Mathematical Society},
     address={Providence, RI},
        date={1998},
      volume={60},
        ISBN={0-8218-0860-5},
      review={\MR{2000c:46108}},
}

\bib{ram:am71}{article}{
      author={Ramsay, Arlan},
       title={Virtual groups and group actions},
        date={1971},
     journal={Advances in Math.},
      volume={6},
       pages={253\ndash 322 (1971)},
      review={\MR{43 \#7590}},
}

\bib{ram:am76}{article}{
      author={Ramsay, Arlan},
       title={Nontransitive quasi-orbits in {M}ackey's analysis of group
  extensions},
        date={1976},
        ISSN={0001-5962},
     journal={Acta Math.},
      volume={137},
      number={1},
       pages={17\ndash 48},
      review={\MR{MR0460531 (57 \#524)}},
}

\bib{ren:groupoid}{book}{
      author={Renault, Jean},
       title={A groupoid approach to {\cs}-algebras},
      series={Lecture Notes in Mathematics},
   publisher={Springer-Verlag},
     address={New York},
        date={1980},
      volume={793},
}

\bib{ren:jot87}{article}{
      author={Renault, Jean},
       title={Repr\'esentations des produits crois\'es d'alg\`ebres de
  groupo\"\i des},
        date={1987},
     journal={J. Operator Theory},
      volume={18},
       pages={67\ndash 97},
}

\bib{ren:jot91}{article}{
      author={Renault, Jean},
       title={The ideal structure of groupoid crossed product \cs-algebras},
        date={1991},
     journal={J. Operator Theory},
      volume={25},
       pages={3\ndash 36},
}

\bib{sau:ma77}{article}{
      author={Sauvageot, Jean-Luc},
       title={Id\'eaux primitifs de certains produits crois\'es},
        date={1977/78},
        ISSN={0025-5831},
     journal={Math. Ann.},
      volume={231},
      number={1},
       pages={61\ndash 76},
      review={\MR{MR473355 (80d:46112)}},
}

\bib{sau:jfa79}{article}{
      author={Sauvageot, Jean-Luc},
       title={Id\'eaux primitifs induits dans les produits crois\'es},
        date={1979},
        ISSN={0022-1236},
     journal={J. Funct. Anal.},
      volume={32},
      number={3},
       pages={381\ndash 392},
      review={\MR{81a:46080}},
}

\bib{wil:crossed}{book}{
      author={Williams, Dana~P.},
       title={Crossed products of {$C{\sp \ast}$}-algebras},
      series={Mathematical Surveys and Monographs},
   publisher={American Mathematical Society},
     address={Providence, RI},
        date={2007},
      volume={134},
        ISBN={978-0-8218-4242-3; 0-8218-4242-0},
      review={\MR{MR2288954 (2007m:46003)}},
}

\end{biblist}
\end{bibdiv}

\end{document}